

\documentclass[preprint]{elsarticle}
\RequirePackage[top=1.5cm,bottom=1.5cm,left=2.cm,right=2.cm,includehead,includefoot]{geometry}


\usepackage{amssymb}
\usepackage{fancyhdr}
\usepackage{amsmath,amsfonts,amsthm}
\usepackage{mathrsfs}
\usepackage{amsfonts}
\usepackage{cases}
\usepackage{verbatim}
\usepackage{graphicx}      
\usepackage{epstopdf}
\usepackage{grffile}
\graphicspath{{figures/}}
\usepackage{subfig}
\usepackage{color}
\usepackage{caption}



\newtheorem{thm}{Theorem}[section]

\newtheorem{lem}[thm]{Lemma}

\theoremstyle{remark}
\newtheorem{rem}[thm]{Remark}

\DeclareMathOperator{\sech}{sech}

\bibliographystyle{elsarticle-num}

\begin{document}

\begin{frontmatter}
\title{Mass and energy conservative high order diagonally implicit Runge--Kutta schemes for nonlinear Schr\"odinger equation in one and two dimensions}
\tnotetext[mytitlenote]{This work was supported by the National Natural Science Foundation of China (No. 11571366, 11901577, 11971481), the
National Natural Science Foundation of Hunan (No. S2017JJQNJJ0764), the fund from Hunan Provincial Key Laboratory
of Mathematical Modeling and Analysis in Engineering (No. 2018MMAEZD004), the funds from NNW (No. NNW2018-
ZT4A08) and the Research Fund of NUDT (No. ZK17-03-27).}

\author[nudt]{Ziyuan Liu}
\ead{liuziyuan17@nudt.edu.cn}
\author[nudt]{Hong Zhang\corref{cor1}}
\ead{zhanghnudt@163.com}
\author[nudt]{Xu Qian}
\ead{qianxu@nudt.edu.cn}
\author[nudt,hpc]{Songhe Song}
\ead{shsong@nudt.edu.cn}
\cortext[cor1]{Corresponding author.}

\address[nudt]{College of Liberal Arts and Sciences, National University of Defense Technology, Changsha 410073, China}
\address[hpc]{State Key Laboratory of High Performance Computing, National University of Defense Technology, Changsha 410073, China}

\begin{abstract}
  We present and analyze a series of conservative diagonally implicit Runge--Kutta schemes for the nonlinear Schr\"odiner equation. With the application of the newly developed invariant energy
  quadratization approach, these schemes possess not only high accuracy , high order convergence (up to fifth order) and efficiency due to diagonally implicity but also mass and energy conservative properties. Both theoretical
  analysis and numerical experiments of one- and two-dimensional dynamics are carried out to verify the invariant conservative properties, convergence orders and longtime simulation stability.
\end{abstract}

\begin{keyword}
Invariant energy quadratization approach, nonlinear Schr\"{o}dinger equation, mass and energy preserving, Fourier pseudospectral method.

\end{keyword}

\end{frontmatter}

\section{Introduction}
As the most canonical equation in quantum mechanics, the nonlinear Schr\"odinger (NLS) equation describes various physical phenomena such as the hydrodynamics \cite{zou2019conservative}, the
nonlinear optics \cite{hasegawa1989optical} and plasma physics \cite{hefter1985application}. To begin with, we consider the following one-dimensional nonlinear Schr\"odinger equation
\begin{equation}
  \label{eqn:nlse}
  \mathrm{i}u_t + \Delta u + \beta |u|^2 u = 0,
\end{equation}
subject to the initial and periodic boundary conditions
\begin{align}
    & u(x,0)=u_{0}(x), \ x \in \Omega = [a, b], \label{initial_condition} \\
    & u(a, t) = u(b, t), \ t \geq 0, \label{boundary_condition}
\end{align}
where $u$ is a complex function and $\beta$ stands for a constant real parameter. With periodic boundary condition, the NLS equation enjoys the following global mass and energy conservation law:
\begin{align}
  M(u(:, t)) &:= \int_{\Omega}|u(\mathbf{x}, t)|^2d\mathbf{x} = M(u_0), \label{eq:init-mass}\\
  E(u(:, t)) &:= \int_\Omega(-\frac{1}{2}|\nabla u(\mathbf{x}, t)|^2 + \frac{\beta}{4} |u(\mathbf{x}, t)|^4)d\mathbf{x} = E(u_0). \label{eq:init-energy}
\end{align}
The well-posedness and dynamical properties of the NLS equation were investigated in the literature \cite{sulem2007nonlinear,bao2006dynamics,makhankov1978dynamics}. Besides, there have been quantities of numerical methods developed for the NLS equation. To list a few, Akrivis et al. introduced a fully discrete Galerkin method in \cite{akrivis1991fully},
Dehghan and Mirzaei discussed the application of meshless methods on 2D NLS equation \cite{dehghan2008numerical}, Zhu et al. developed a multi-symplectic wavelet collocation method in
\cite{zhu2011multi}, Wang et al.developed a Fourth-order compact and energy conservative difference schemes \cite{wang2013fourth}, Qian et al. introduced a semi-explicit multi-symplectic splitting
scheme \cite{qian2014semi} and Y. Gong et al. proposed a conservative Fourier pseudo-spectral method \cite{gong2017conservative}. Among the proposed methods, the conservative methods have shown superiority over non-preserving ones in stability and longtime simulation.

The invariant energy quadratization (IEQ) approach is a recently developed technique that enables the methods which preserve quadratic invariant to preserve the conservation laws of arbitrary
order polynomial structures. The core idea of IEQ is to transform the original equation with high degree energy term into an equivalent coupled system by introducing a variable according to
the original energy formulation. And by this reformulation, arbitrary nonlinear terms can be treated semi-explicitly, which leads to a linear system. This approach was first proposed by Yang et al. in
\cite{yang2017linear,yang2017efficient} to preserve the evolution of a modified energy for the gradient flow, and many conservative or unconditionally energy stable methods using IEQ approach have
been studied for various equations afterwards, for instance, the Cahn--Hilliard equation\cite{yang2017numerical}, the sine-Gordon equation\cite{gong2019linearly}, the fractional Klein--Gordon-Schr\"odinger equation
\cite{wang2019linear}, the gradient flow model \cite{gong2019arbitrarily} and so on. Recently, based on IEQ approach, Jiang et al. developed a novel class of arbitrarily high order energy-preserving Runge--Kutta methods for the
Camassa--Holm eqaution \cite{jiang2019arbitrarily} and Zhang et al. introduced the high-order energy-preserving diagonally implicit Runge--Kutta methods for nonlinear Hamiltonian ODEs \cite{zhang2019diagonal}. In this article, we propose a series of mass
and energy conservative methods to discrete the reformulated NLS system by applying the Fourier pseudospectral schemes in the space direction and diagonally implicit Runge--Kutta method in the time direction for 1D and 2D NLS equations.

This article is organized as follows. In Sec. \ref{sec:2}, by utilizing the invariant energy quadratization approach, an equivalent system possessing mass and energy conservation law is introduced. After the
introducing of the spatial semi-discrete system using the Fourier pseudospectral method in Sec. \ref{sec:3}, we apply the diagonally implicit Runge--Kutta schemes which are high order and conservative
in Sec. \ref{sec:4}. Both of the semi-discrete and fully discrete systems are rigorously proven mass and energy
preserving in corresponding sections. Besides, the discretization for the two-dimensional NLS equation is also discussed in these two sections. Last but not the least, numerical experiments for both 1D and 2D cases are demonstrated in
Sec. \ref{sec:5} to verify the conservative properties, the time convergence order ranging from 2nd to 5th, the ability of capturing singularities and the stability in longtime simulation.

\section{Mass and energy preserving scheme using the IEQ approach}\label{sec:2}
Consider the domain $\Omega$ with a smooth boundary. The $L^2$ inner product and its norm are defined as $(f, g) = \int_{\Omega}f\bar{g}d\mathbf{x}$ and $\|f\|_2=\sqrt{(f, f)}$, $\forall f, g \in
L^2(\Omega)$, where $\bar g$ denotes the conjugation of $g$. By introducing an auxiliary variable $r(u) = |u|^2$, the NLS equation \eqref{eqn:nlse} can be transformed into
\begin{align}
  u_t &= \mathrm{i}u_{xx} + \mathrm{i}\beta r u, \label{eqn:fnse1}\\
  r_t & = u\bar{u}_t + \bar{u} u_t \label{eqn:fnse3}.
\end{align}
\begin{thm}
  The system \eqref{eqn:fnse1}--\eqref{eqn:fnse3} enjoys the following local energy conservation law
  \begin{align}\label{eqn:lec}
    \frac{\partial}{\partial t}(-|u_x|^2 + \frac{\beta}{2} |r|^2) + \frac{\partial}{\partial x} (u_x\bar{u}_t + \bar{u}_x u_t) = 0,
  \end{align}
\end{thm}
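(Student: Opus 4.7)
The plan is to verify the identity directly by expanding the time derivative of the density and the spatial derivative of the flux, substituting the reformulated evolution equations \eqref{eqn:fnse1}--\eqref{eqn:fnse3} where needed, and checking that everything cancels. Since $r=|u|^2$ is real, all derivatives of $r$ are real, which will be used without comment.

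First I would expand the two derivative terms separately. Writing $|u_x|^2 = u_x \bar{u}_x$ and differentiating in time, and expanding $\partial_x(u_x\bar{u}_t + \bar{u}_x u_t)$ by the product rule, I expect the mixed derivatives $u_{xt}\bar{u}_x$, $u_x \bar{u}_{xt}$, and their conjugates to cancel in pairs, leaving only
\begin{equation*}
  \frac{\partial}{\partial t}(-|u_x|^2) + \frac{\partial}{\partial x}(u_x\bar{u}_t + \bar{u}_x u_t) = u_{xx}\bar{u}_t + \bar{u}_{xx} u_t.
\end{equation*}
This cancellation is essentially free and is the reason one packages the flux in exactly this form.

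Next I would use \eqref{eqn:fnse1} to eliminate $u_{xx}$, namely $u_{xx} = -\mathrm{i}u_t - \beta r u$, together with its complex conjugate $\bar{u}_{xx} = \mathrm{i}\bar{u}_t - \beta r \bar{u}$ (here reality of $r$ is essential). Substituting yields
\begin{equation*}
  u_{xx}\bar{u}_t + \bar{u}_{xx} u_t = \bigl(-\mathrm{i}u_t\bar{u}_t + \mathrm{i}\bar{u}_t u_t\bigr) - \beta r\bigl(u\bar{u}_t + \bar{u} u_t\bigr) = -\beta r\,r_t,
\end{equation*}
where the quadratic-in-$u_t$ piece vanishes and the bracketed quantity is exactly $r_t$ by \eqref{eqn:fnse3}. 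Finally, since $r$ is real, $\partial_t(\tfrac{\beta}{2}|r|^2) = \beta r\, r_t$, and adding this to the previous display gives zero, which is the claim.

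There is no real obstacle here; the only subtlety is keeping track of signs and factors of $\mathrm{i}$ when taking the conjugate of \eqref{eqn:fnse1}, and making sure to invoke the auxiliary equation \eqref{eqn:fnse3} at exactly the right moment so that the nonlinear term $\beta r(u\bar{u}_t+\bar{u}u_t)$ collapses into $\beta r\, r_t$. The whole argument is a pointwise algebraic identity; in particular no integration by parts or boundary conditions enter, which is what makes the statement a \emph{local} conservation law rather than the global one used later.
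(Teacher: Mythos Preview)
Your proof is correct and is essentially the same computation as the paper's, just organized in reverse: the paper multiplies \eqref{eqn:fnse1} by $-\mathrm{i}\bar{u}_t$, adds the conjugate, and rewrites $u_{xx}\bar{u}_t=\partial_x(u_x\bar{u}_t)-u_x\bar{u}_{tx}$ to \emph{derive} the identity, whereas you expand the claimed identity and substitute $u_{xx}=-\mathrm{i}u_t-\beta r u$ to \emph{verify} it. The algebraic cancellations (the $\mathrm{i}|u_t|^2$ terms and the mixed $u_{xt}\bar{u}_x$ terms) and the invocation of \eqref{eqn:fnse3} are identical in both.
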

\begin{proof}
Multiplying Eq. \eqref{eqn:fnse1} with $-\mathrm{i}\bar{u}_t$ gives
\begin{equation}\label{eqn:add0}
\begin{aligned}
-\mathrm{i}\bar{u}_t u_t &= u_{xx} \bar{u}_t + \beta r u \bar{u}_t \\
            & = u_{xx}\bar{u}_t + u_x \bar{u}_{t x} - u_x \bar{u}_{t x} + \beta r u \bar{u}_t \\
            & = \frac{\partial}{\partial x}(u_x \bar{u}_t) - u_x \bar{u}_{t x} + \beta r u\bar{u}_t.
\end{aligned}
\end{equation}
Adding Eq. \eqref{eqn:add0} with its conjugation yields
\begin{equation}\label{eqn:lecproof}
  \begin{aligned}
    \mathrm{i}u_t \bar{u}_t -\mathrm{i}\bar{u}_t u_t & = \frac{\partial}{\partial x}(u_x \bar{u}_t + \bar{u}_x u_t) -u_x \bar{u}_{t x} - \bar{u}_x u_{tx} + \beta r(u \bar{u}_t + \bar{u} u_t) \\
    & = \frac{\partial}{\partial x}(u_x \bar{u}_t + \bar{u}_x u_t) + \frac{\partial}{\partial t}(-|u_x|^2  + \frac{\beta}{2} |r|^2).
  \end{aligned}
\end{equation}
Seeing that
\begin{equation}
  \mathrm{i}u_t \bar{u}_t -\mathrm{i}\bar{u}_t u_t = 0,
\end{equation}
the proof is complete.
\end{proof}

Furthermore, under suitable boundary conditions, such as the periodic boundary condition, the local energy conservation law (\ref{eqn:lec}) implies the global energy conservation law
\begin{align}\label{eqn:gcl}
  \frac{\mathrm{d}}{\mathrm{d} t}E(u) = \frac{\mathrm{d}}{\mathrm{d} t} \int_\Omega(-\frac{1}{2}|u_x|^2 + \frac{\beta}{4} |r|^2)dx = 0.
\end{align}

\begin{thm}
  Under periodic boundary condition, the system \eqref{eqn:fnse1}--\eqref{eqn:fnse3} admits the mass conservation law
\begin{equation}
\label{eq:mass-con}
  \frac{\mathrm{d}}{\mathrm{d} t} M(u) = \frac{\mathrm{d}}{\mathrm{d} t} \int_\Omega|u(x)|^2dx = 0.
\end{equation}
\end{thm}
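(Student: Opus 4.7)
The plan is to reproduce the classical mass conservation argument for the Schr\"odinger equation in the reformulated setting. First I would multiply \eqref{eqn:fnse1} by $\bar u$, obtaining $\bar u u_t = \mathrm{i}\bar u u_{xx} + \mathrm{i}\beta\, r\, u\bar u$, and then add to it the complex conjugate of the same relation. Since the auxiliary variable $r = |u|^2$ is real and $\beta$ is a real parameter, the nonlinear contribution $\mathrm{i}\beta r u\bar u$ is purely imaginary and therefore cancels exactly against its conjugate. What remains is the pointwise identity $\bar u u_t + u\bar u_t = \mathrm{i}(\bar u u_{xx} - u\bar u_{xx})$, whose left-hand side is simply $\partial_t |u|^2$.

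Next I would integrate this identity over $\Omega$. The left-hand side becomes $\frac{d}{dt}\int_\Omega |u|^2 dx$, which is exactly $\frac{d}{dt} M(u)$. On the right, a single integration by parts converts $\int_\Omega \bar u u_{xx}\, dx$ into $-\int_\Omega |u_x|^2 dx$, and likewise for the conjugate term; the boundary contributions vanish because $u$ is periodic on $\Omega = [a,b]$. The two resulting integrals cancel, yielding $\frac{d}{dt} M(u) = 0$.

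I do not anticipate any genuine obstacle here. Equation \eqref{eqn:fnse3} for the auxiliary variable $r$ is not actually required in the argument; mass conservation depends only on the evolution equation for $u$, and the only property of $r$ that enters is its reality, which is built into its definition $r = |u|^2$. The computation is essentially the textbook argument for the original NLS equation, with the periodic boundary condition supplying exactly what is needed to discard the boundary terms from integration by parts. One could equivalently phrase the proof in terms of the $L^2$ inner product $(\cdot,\cdot)$ introduced earlier in the section, using the skew-adjointness of $\mathrm{i}\partial_{xx}$ on periodic functions to reach the same conclusion.
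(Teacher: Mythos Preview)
Your argument is correct and is essentially the same as the paper's: multiply \eqref{eqn:fnse1} by $\bar u$, observe that the nonlinear term $\mathrm{i}\beta r|u|^2$ is purely imaginary, and use integration by parts with the periodic boundary condition to eliminate the Laplacian contribution. The only cosmetic difference is that the paper integrates first and then takes the real part, whereas you add the conjugate before integrating; these are equivalent.
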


\begin{proof}

Multiplying Eq. \eqref{eqn:fnse1} with $\bar{u}$ gives
\begin{equation}\label{eqn:mass0}
 \bar{u} u_t = \mathrm{i}u_{xx} \bar{u} + \mathrm{i} \beta r|u|^2.
\end{equation}
Then by integrating the above equation over $\Omega$, we obtain
\begin{equation}
\label{eq:mass1}
   \int_{\Omega}\bar u u_t dx= \mathrm{i}u_x\bar{u}|_a^b - \mathrm{i}\int_{\Omega}|u_x|^2 dx +  \mathrm{i}\beta\int_{\Omega}r|u|^2dx.
\end{equation}
Denote the real part of $u$ by $\Re(u)$. Combining the following equation
\begin{equation}
\label{eq:mass2}
  \frac{\mathrm{d}}{\mathrm{d} t} M(u) = \int_{\Omega}(\bar u u_t + u \bar{u}_t) dx = 2\Re(\int_{\Omega}\bar u u_tdx)
\end{equation}
with the real part of Eq. \eqref{eq:mass1} and the periodic boundary condition we can conclude that Eq. \eqref{eq:mass-con} holds.
\end{proof}

\section{Local energy conserving spatial semi-discretization using the Fourier pseudospectral method}\label{sec:3}
In order to discretize the system \eqref{eqn:fnse1} -- \eqref{eqn:fnse3} in the space direction, the well known Fourier pseudospectral method will be applied. The key point of the Fourier
pseudospectral method is to approximate the partial differential operators. Ref. \cite{chen2001multi} presents that the Fourier spectral differential matrix $D_k$ can
approximate the $k$th-order partial differential operator $\partial^k x$ with spectral accuracy. We present $D_1$ as follows:
\[
(D_1)_{j+1,l+1} = \left\{ \begin{array}{ll}
                            \frac{1}{2} (-1)^{j+l} \mu \cot(\mu\frac{x_j -x_l}{2}), & j\neq l, \\
                            0, & j = l, \\
                        \end{array}
                \right. for \quad j, l = 0, 1, \cdots N-1,
\]
where $\mu = \frac{2 \pi}{L}$, $L$ is the length of space interval,  the even integer $N$ is the number of the subintervals and $h=\frac{L}{N}$. It is obvious that $D_1$ is an $N\times N$ real skew-symmetric
matrix. For more details, see Ref. \cite{gong2014multi} and references therein.

By applying the Fourier pseudospectral method in the space direction of Eqns. \eqref{eqn:fnse1} -- \eqref{eqn:fnse3}, we obtain the semi-discrete system
\begin{align}
  \mathbf{u}_t &= f(\mathbf{u}, \mathbf{r}) = \mathrm{i} D_1^2 \mathbf{u} + \mathrm{i} \beta \mathbf{r} \cdot \mathbf{u}, \label{eqn:fnsesemi1}\\
  \mathbf{r}_t &= g(\mathbf{u}, \mathbf{r}) = \bar{\mathbf{u}} \cdot  \mathbf{u}_t + \mathbf{u} \cdot  \bar{\mathbf{u}}_t,\label{eqn:fnsesemi3}
\end{align}
where $\mathbf{u} = [u_0, u_1, \cdots, u_{N-1}]^T$ and $\mathbf{u} \cdot  \mathbf{v} = [u_0 v_0, u_1 v_1, \cdots, u_{N-1} v_{N-1}]^T$.

Next, we will show that this system possesses a semi-discrete modified energy conservation law.

\begin{thm}\label{thm:semi-energy}
  The semi-discrete system \eqref{eqn:fnsesemi1} -- \eqref{eqn:fnsesemi3} possesses the semi-discrete modified energy conservation law:
  \begin{equation}
    \frac{\mathrm{d}}{\mathrm{d} t}E(\mathbf{u}, \mathbf{r}) = \frac{\mathrm{d}}{\mathrm{d} t} (-\frac{1}{2}\|D_1 \mathbf{u}\|^2 + \frac{\beta}{4} \|\mathbf{r}\|^2) = 0.\label{eqn:gclsemi}
  \end{equation}
\end{thm}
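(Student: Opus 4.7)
The plan is to mirror the continuous proof of \eqref{eqn:lec}--\eqref{eqn:gcl} at the semi-discrete level, using skew-symmetry of $D_1$ as the discrete analogue of integration by parts. Throughout, I will use the discrete Hermitian inner product $\langle \mathbf{a},\mathbf{b}\rangle=\sum_j a_j\bar b_j$, together with the pointwise product structure $\mathbf{u}\cdot\mathbf{v}$ already introduced. Since $D_1$ is real and skew-symmetric, $D_1^{\ast}=D_1^{\top}=-D_1$, so $\langle D_1\mathbf{a},\mathbf{b}\rangle=-\langle\mathbf{a},D_1\mathbf{b}\rangle$ and consequently $D_1^2$ is self-adjoint with $\langle D_1\mathbf{a},D_1\mathbf{b}\rangle=-\langle\mathbf{a},D_1^2\mathbf{b}\rangle$. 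This is the only structural property of $D_1$ I will need.

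First I will differentiate the modified energy and split it into two pieces. For the kinetic part, the skew-adjointness of $D_1$ gives
\[
\tfrac{\mathrm{d}}{\mathrm{d}t}\|D_1\mathbf{u}\|^2
  =2\,\Re\langle D_1\mathbf{u}_t,D_1\mathbf{u}\rangle
  =-2\,\Re\langle \mathbf{u}_t,D_1^2\mathbf{u}\rangle,
\]
which plays the role of the $\partial_x(u_x\bar u_t)-u_x\bar u_{tx}$ manipulation in \eqref{eqn:add0}. I will then substitute $D_1^2\mathbf{u}=-\mathrm{i}\mathbf{u}_t-\beta\,\mathbf{r}\cdot\mathbf{u}$ from \eqref{eqn:fnsesemi1} into this expression. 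The contribution $\Re\langle\mathbf{u}_t,-\mathrm{i}\mathbf{u}_t\rangle$ vanishes because $\langle\mathbf{u}_t,-\mathrm{i}\mathbf{u}_t\rangle=\mathrm{i}\|\mathbf{u}_t\|^2$ is purely imaginary; this is exactly the discrete counterpart of the cancellation $\mathrm{i}u_t\bar u_t-\mathrm{i}\bar u_t u_t=0$. What remains is $-\tfrac{1}{2}\tfrac{\mathrm{d}}{\mathrm{d}t}\|D_1\mathbf{u}\|^2=-\beta\,\Re\langle\mathbf{r}\cdot\mathbf{u},\mathbf{u}_t\rangle$.

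For the potential part, since $\mathbf{r}$ is real I can write $\tfrac{\mathrm{d}}{\mathrm{d}t}\|\mathbf{r}\|^2=2\langle\mathbf{r},\mathbf{r}_t\rangle$, and substituting \eqref{eqn:fnsesemi3} together with the pointwise identity $r_j(\bar u_j(u_t)_j+u_j\overline{(u_t)_j})=2\Re\bigl(r_j\bar u_j(u_t)_j\bigr)$ gives $\tfrac{\mathrm{d}}{\mathrm{d}t}\|\mathbf{r}\|^2=4\Re\langle\mathbf{r}\cdot\mathbf{u},\mathbf{u}_t\rangle$, so $\tfrac{\beta}{4}\tfrac{\mathrm{d}}{\mathrm{d}t}\|\mathbf{r}\|^2=+\beta\,\Re\langle\mathbf{r}\cdot\mathbf{u},\mathbf{u}_t\rangle$. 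Adding the two contributions produces an exact cancellation, yielding $\tfrac{\mathrm{d}}{\mathrm{d}t}E(\mathbf{u},\mathbf{r})=0$.

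The only real obstacle is bookkeeping around complex conjugation: ensuring the correct placement of bars in $\langle\cdot,\cdot\rangle$, checking that $\mathbf{r}$ is genuinely real so that $\tfrac{\mathrm{d}}{\mathrm{d}t}\|\mathbf{r}\|^2$ reduces to $2\langle\mathbf{r},\mathbf{r}_t\rangle$ rather than $2\Re\langle\mathbf{r},\mathbf{r}_t\rangle$, and verifying that the two cross terms $\Re\langle\mathbf{r}\cdot\mathbf{u},\mathbf{u}_t\rangle$ carry opposite signs. Unlike the continuous case, no boundary term appears because skew-symmetry of $D_1$ already encodes the periodicity used in the continuous integration by parts; hence no additional periodic boundary hypothesis is required at this semi-discrete level.
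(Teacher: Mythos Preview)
Your argument is correct and is essentially the same as the paper's: both rely on the skew-symmetry of $D_1$ to convert $\langle D_1\mathbf{u}_t,D_1\mathbf{u}\rangle$ into $-\langle\mathbf{u}_t,D_1^2\mathbf{u}\rangle$, and both use that $\langle\mathbf{u}_t,\mathrm{i}\mathbf{u}_t\rangle$ is purely imaginary together with the substitution of \eqref{eqn:fnsesemi3} to produce the cancelling $\beta\,\Re\langle\mathbf{r}\cdot\mathbf{u},\mathbf{u}_t\rangle$ terms. The only cosmetic difference is that the paper starts by taking the inner product of \eqref{eqn:fnsesemi1} with $\mathrm{i}\mathbf{u}_t$ and then adds the conjugate, whereas you differentiate $E(\mathbf{u},\mathbf{r})$ first and substitute; these are equivalent manipulations.
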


\begin{proof}
We define discrete inner product and norm as:
\begin{equation}
\label{eq:discrete-norm}
  (\mathbf{u}, \mathbf{v}) := h\sum_{i=0}^{N-1} u_i \bar{v}_i, \ \|\mathbf{u}\|^2 := (\mathbf{u}, \mathbf{u}).
\end{equation}
Taking the inner product of Eq. \eqref{eqn:fnsesemi1} with $\mathrm{i} \mathbf{u}_t$ gives
\begin{equation}\label{eqn:addsemi0}
\begin{aligned}
 -\mathrm{i} ({\mathbf{u}_t}, {\mathbf{u}}_t) &= (D_1^2 \mathbf{u}, \mathbf{u}_t) + (\beta\mathbf{r} \cdot \mathbf{u}, \mathbf{u}_t) \\
    & = (D_1^2 \mathbf{u}, \mathbf{u}_t) + (\beta\mathbf{r}, \mathbf{u}_t \cdot \bar{\mathbf{u}}).
\end{aligned}
\end{equation}
Since $D_1$ is a real skew-symmetric matrix, we have
\begin{equation}
  (D_1^2 \mathbf{u}, \mathbf{u}_t) = \mathbf{u}_t^H D_1^2 \mathbf{u}
                                    = (-D_1\mathbf{u}_t)^H D_1 \mathbf{u}
                                    = -(D_1 \mathbf{u}, D_1 \mathbf{u}_t).
\end{equation}
By adding up Eq. (\ref{eqn:addsemi0}) with its conjugation, we obtain
\begin{equation}\label{eqn:lecproofsemi}
  \begin{aligned}
    0 &= -\mathrm{i} (\mathbf{u}_t, \mathbf{u}_t) + \mathrm{i} ({\mathbf{u}}_t, \mathbf{u}_t) \\
    & = -(D_1 \mathbf{u}, D_1 \mathbf{u}_t) -(D_1 \mathbf{u}_t, D_1 \mathbf{u}) + (\beta\mathbf{r}, \bar{\mathbf{u}}_t \cdot \mathbf{u} + {\mathbf{u}}_t \cdot \bar{\mathbf{u}}) \\
    & = 2\frac{d}{dt} (-\frac{1}{2} \|D_1 \mathbf{u}\|^2 + \frac{\beta}{4} \|\mathbf{r}\|^2).
  \end{aligned}
\end{equation}
\end{proof}

\begin{rem}
  It is more common to define the global energy for the discrete system of NLS equation as $\tilde E(\mathbf{u}) = -\frac{1}{2} \|D_1 \mathbf{u}\|^2 + \frac{\beta}{4} \|\mathbf{u}\|^4$. However,
  since the discrete global energy defined in Eq. \eqref{eqn:gclsemi} is just another kind of discretization without any loss of accuracy for Eq. \eqref{eq:init-energy}, $E(\mathbf{u})$ is recognized as
  simply the discrete global energy in the following sections.
\end{rem}

According to Thm. \ref{thm:semi-energy}, we have an important lemma which will play a key role in the proof for the energy conservation law of the fully discrete system.
\begin{lem}\label{lem1}
  $\forall \mathbf{u} \in \mathbb{C}^N, \mathbf{r} \in \mathbb{R}^N$, we have
  \begin{equation}\label{eq:cor1-equ}
    \frac{1}{2} \Re(f(\mathbf{u}, \mathbf{r})^T D_1^2 \bar{\mathbf{u}}) + \frac{\beta}{4} g(\mathbf{u}, \mathbf{r})^T \mathbf{r} = 0.
  \end{equation}
\end{lem}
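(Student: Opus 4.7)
The identity \eqref{eq:cor1-equ} is the algebraic heart of Thm.~\ref{thm:semi-energy}: evaluated on a trajectory of \eqref{eqn:fnsesemi1}--\eqref{eqn:fnsesemi3}, where $\mathbf{u}_t=f$ and $\mathbf{r}_t=g$, its left-hand side equals $(2h)^{-1}\tfrac{\mathrm{d}}{\mathrm{d}t}E(\mathbf{u},\mathbf{r})$, which vanishes. The lemma asserts that this cancellation is in fact pointwise in $(\mathbf{u},\mathbf{r})\in\mathbb{C}^N\times\mathbb{R}^N$ and does not require any evolution. I would therefore prove it by re-running the proof of Thm.~\ref{thm:semi-energy} at the purely algebraic level, pairing the defining relation for $f$ with $\mathrm{i}f$ in place of $\mathrm{i}\mathbf{u}_t$.

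Concretely, the first step is to take the discrete inner product \eqref{eq:discrete-norm} of $f=\mathrm{i}D_1^2\mathbf{u}+\mathrm{i}\beta\,\mathbf{r}\cdot\mathbf{u}$ with $\mathrm{i}f$, producing
\[
-\mathrm{i}(f,f)=(D_1^2\mathbf{u},f)+(\beta\mathbf{r}\cdot\mathbf{u},f).
\]
Since $(f,f)=\|f\|^2\in\mathbb{R}$, the left-hand side is purely imaginary, so adding this equation to its complex conjugate annihilates the left-hand side and leaves $2\,\Re(D_1^2\mathbf{u},f)+2\,\Re(\beta\mathbf{r}\cdot\mathbf{u},f)=0$.

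The second step is to match these two real parts with the two terms in \eqref{eq:cor1-equ}. The skew-symmetry of $D_1$ makes $D_1^2$ real and symmetric, and together with $(a,b)=h\,a^T\bar b$ this gives $\Re(D_1^2\mathbf{u},f)=h\,\Re(f^T D_1^2\bar{\mathbf{u}})$. For the nonlinear term, $\beta\mathbf{r}$ is real and, directly from the definition of $g$, $g_k=\bar u_k f_k+u_k\bar f_k=2\,\Re(\bar u_k f_k)$, so $\Re(\beta\mathbf{r}\cdot\mathbf{u},f)=h\beta\sum_k r_k\,\Re(\bar u_k f_k)=\tfrac{h\beta}{2}\,g^T\mathbf{r}$. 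Substituting and dividing by $4h$ yields \eqref{eq:cor1-equ}.

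The only mildly delicate point is the sesquilinear-to-bilinear passage $\Re(D_1^2\mathbf{u},f)=h\,\Re(f^T D_1^2\bar{\mathbf{u}})$, which uses the symmetry of $D_1^2$ (itself a consequence of the skew-symmetry of $D_1$) to swap $f\leftrightarrow\bar{\mathbf{u}}$ inside $\Re$ without altering it; once this is in hand, the rest is bookkeeping that mirrors the proof of Thm.~\ref{thm:semi-energy}.
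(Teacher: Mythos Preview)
Your proof is correct and is essentially the paper's approach: the paper's (very terse) proof simply observes that the left-hand side of \eqref{eq:cor1-equ} is, up to a constant factor, $\tfrac{\mathrm{d}}{\mathrm{d}t}E(\mathbf{u},\mathbf{r})$ with $\mathbf{u}_t=f$, $\mathbf{r}_t=g$, and then invokes Thm.~\ref{thm:semi-energy}; you unpack that same computation explicitly at the algebraic level by pairing the defining relation for $f$ with $\mathrm{i}f$, which is precisely the manipulation \eqref{eqn:addsemi0}--\eqref{eqn:lecproofsemi} in the proof of Thm.~\ref{thm:semi-energy}. The only difference is presentational.
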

\begin{proof}
\begin{equation}
  \begin{split}
    \frac{\mathrm{d}}{\mathrm{d} t}E(\mathbf{u}) &= \frac{\mathrm{d}}{\mathrm{d} t} (-\frac{1}{2}\|D_1 \mathbf{u}\|^2 + \frac{\beta}{4} \|\mathbf{r}\|^2) \\
                &= \frac{1}{2} ( \mathbf{u}_t^H D_1^2 \mathbf{u} + \mathbf{u}^H D_1^2 \mathbf{u}_t ) + \frac{\beta}{2} \mathbf{r}_t^T \mathbf{r} \\
                &= \frac{1}{2}(\Re(f(\mathbf{u}, \mathbf{r})^T D_1^2 \bar{\mathbf{u}} ) + \frac{\beta}{4} g(\mathbf{u}, \mathbf{r})^T \mathbf{r})\\
                &= 0.
  \end{split}
\end{equation}
\end{proof}

\begin{thm}\label{thm:semi-mass}
  The semi-discrete system \eqref{eqn:fnsesemi1} -- \eqref{eqn:fnsesemi3} admits the semi-discrete mass conservation law:
  \begin{equation}\label{eq:semi-mass}
    \frac{\mathrm{d}}{\mathrm{d} t}M(\mathbf{u}, \mathbf{r}) = \frac{\mathrm{d}}{\mathrm{d} t} \|\mathbf{u}\|^2 = 0.
  \end{equation}
\end{thm}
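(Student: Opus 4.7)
The plan is to mimic the continuous-case argument for (\ref{eq:mass-con}) at the discrete level, with the skew-symmetry of $D_1$ taking over the role played by integration by parts under periodic boundary conditions. First I would differentiate $\|\mathbf{u}\|^2=(\mathbf{u},\mathbf{u})$ using the sesquilinear product rule for the discrete inner product defined in (\ref{eq:discrete-norm}); since $(\mathbf{u},\mathbf{u}_t)=\overline{(\mathbf{u}_t,\mathbf{u})}$, this collapses $\frac{\mathrm{d}}{\mathrm{d}t}\|\mathbf{u}\|^2$ to $2\Re(\mathbf{u}_t,\mathbf{u})$, so the whole theorem reduces to showing that $(\mathbf{u}_t,\mathbf{u})$ is purely imaginary.

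Next I would substitute the semi-discrete evolution (\ref{eqn:fnsesemi1}) to split $(\mathbf{u}_t,\mathbf{u})$ into a linear dispersive contribution $\mathrm{i}(D_1^2\mathbf{u},\mathbf{u})$ and a nonlinear contribution $\mathrm{i}\beta(\mathbf{r}\cdot\mathbf{u},\mathbf{u})$. Both terms carry an explicit factor of $\mathrm{i}$, so it suffices to verify that each of the underlying forms is real-valued. For the dispersive term, since $D_1$ is a real skew-symmetric matrix, $D_1^2$ is real symmetric and hence Hermitian, so the quadratic form $\mathbf{u}^H D_1^2 \mathbf{u}$ lies in $\mathbb{R}$ for every $\mathbf{u}\in\mathbb{C}^N$; this is the clean discrete analogue of the cancellation $\int |u_x|^2$ that appeared in the continuous proof. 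For the nonlinear term, $(\mathbf{r}\cdot\mathbf{u},\mathbf{u})=h\sum_i r_i |u_i|^2$ is manifestly real, provided that $\mathbf{r}\in\mathbb{R}^N$.

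The one point that deserves a brief justification — and which I anticipate to be the only mild obstacle — is exactly that $\mathbf{r}$ remains real-valued along the flow. This follows directly from (\ref{eqn:fnsesemi3}): componentwise, $\mathbf{r}_t=\bar{\mathbf{u}}\cdot\mathbf{u}_t+\mathbf{u}\cdot\bar{\mathbf{u}}_t=2\Re(\bar{\mathbf{u}}\cdot\mathbf{u}_t)\in\mathbb{R}^N$, so provided $r(u)=|u|^2\in\mathbb{R}^N$ at $t=0$, it stays in $\mathbb{R}^N$ for all $t$. Putting the three ingredients together gives $\Re(\mathbf{u}_t,\mathbf{u})=0$ and therefore $\frac{\mathrm{d}}{\mathrm{d}t}\|\mathbf{u}\|^2=0$, completing the proof. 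No quadrature identities or additional structural hypotheses are needed beyond the skew-symmetry of $D_1$ already used in Thm.~\ref{thm:semi-energy}.
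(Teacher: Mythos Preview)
Your proposal is correct and follows essentially the same route as the paper: take the inner product of (\ref{eqn:fnsesemi1}) with $\mathbf{u}$, observe that both $(D_1^2\mathbf{u},\mathbf{u})=-\|D_1\mathbf{u}\|^2$ and $(\mathbf{r}\cdot\mathbf{u},\mathbf{u})$ are real so that $(\mathbf{u}_t,\mathbf{u})$ is purely imaginary, and conclude by taking real parts. Your write-up is in fact slightly more careful than the paper's, which omits the explicit check that $\mathbf{r}$ remains real along the flow.
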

\begin{proof}
  Taking the inner product of \eqref{eqn:fnsesemi1} with $\mathbf{u}$ gives
  \begin{equation}\label{semi-mass-1}
  \begin{aligned}
   ({\mathbf{u}}_t, \mathbf{u}) &= \mathrm{i}(D_1^2 \mathbf{u}, \mathbf{u}) + \mathrm{i}(\beta\mathbf{r} \cdot \mathbf{u}, \mathbf{u}) \\
    & = -\mathrm{i}\|D_1 \mathbf{u}\|^2 + \mathrm{i}(\beta\mathbf{r}, \mathbf{u} \cdot \bar{\mathbf{u}}).
  \end{aligned}
\end{equation}
And then by taking the real part of \eqref{semi-mass-1} we obtain
\begin{equation}
\label{eq:semi-mass-2}
  \frac{\mathrm{d}}{\mathrm{d} t}M(\mathbf{u}, \mathbf{r}) = 0.
\end{equation}
\end{proof}

Next we study the two-dimensional NLS equation:
\begin{equation}
\label{2dim-nls}
  iu_t+u_{xx}+u_{yy}+\beta|u|^2u=0.
\end{equation}
on bounded domain $\Omega = [a, b] \times [c, d]$ with inital and periodic boundary conditions
\begin{align}
    & u(x, y, 0) =u_{0}(x, y), \ (x, y) \in \Omega, \label{2dim-init} \\
    & u(a, y, t) = u(b, y, t), \ u(x, c, t) = u(x, d, t), \ t \geq 0. \label{2dim-bound}
\end{align}
Singularities is possibly developed for the 2D NLS equation \cite{glassey1977blowing} at finite time. With $r=|u|^2$, Eq. \eqref{2dim-init} can be transformed into

\begin{align}
    u_t &= \mathrm{i}u_{xx} + \mathrm{i}u_{yy}+ \mathrm{i}\beta r u, \label{2dim-ieq1} \\
    r_t &= u\bar{u}_t + \bar{u} u_t \label{2dim-ieq2}.
\end{align}

Let $N_x$ and $N_y$ denote the number of the subintervals on $x$ and $y$ direction,
respectively. Without any loss of generality, let $N_x=N_y=N$. Using the Fourier pseudospectral method, the semi-discrete system of \eqref{2dim-init} -- \eqref{2dim-ieq2} is given by:
\begin{align}
  \mathbf{u}_t &= \mathrm{i} D_1^2\mathbf{u}^{n} + \mathrm{i} \mathbf{u}^{n} (D_1^2)^T + \beta\mathbf{r} \cdot \mathbf{u}, \label{eq:2dim-semi1} \\
  \mathbf{r}_t &= \bar{\mathbf{u}} \cdot  \mathbf{u}_t + \mathbf{u} \cdot  \bar{\mathbf{u}}_t. \label{eq:2dim-semi2}
\end{align}

With the definition of inner product $(\mathbf{u}, \mathbf{v}) = h^2\sum_{i, j=1}^Nu_{i,j}\bar{v}_{i,j}$ and norm $\|\mathbf{u}\|^2 = (\mathbf{u}, \mathbf{v}) $, the discrete mass and energy for 2D cases are similarly denoted by
\begin{align}
  M(\mathbf{u}) &= \|\mathbf{u}\|^2, \label{eq:2dim-mass} \\
  E(\mathbf{u}) &= -\frac{1}{2}(\|D_1\mathbf{u}\|^2 + \|\mathbf{u}D_1^T\|^2) + \frac{\beta}{4} \|\mathbf{r}\|^2. \label{eq:2dim-energy}
\end{align}

Similar to the proof in Thm \ref{thm:semi-mass} and \ref{thm:semi-energy}, we can prove that for the 2D NLS equation the conservation laws for mass and energy still hold.

\section{High order conservative diagonally implicit IEQ--RK method}\label{sec:4}
In this section, by applying the technique of invariant energy quadratization, a class of high order global mass and energy conserving diagonally implicit Runge--Kutta schemes are presented. We first recall
the $s$-stage RK scheme.

\textit{$s$-stage RK scheme}. Let $b_i, a_{i j} (i, j = 1, \cdots s)$ be real numbers and $c_i = \sum_{j = 1}^{s} a_{i j}$. $\Delta t$ denotes the time step, $t_n=n\Delta t$ and $\mathbf{u}^n$ and
$\mathbf{r}^n$ stand for the numerical solution for $u$ and $r$ on time $t_n$. For given $\mathbf{u}^n, \mathbf{r}^n$, the following intermediate values are calculated by
\begin{align}
  \mathbf{U}_i &= \mathbf{u}^n + \Delta t \sum_{j = 1}^s a_{i j} \mathbf{f}_j, \label{eqn:rk-mid-1}\\
  \mathbf{R}_i &= \mathbf{r}^n + \Delta t \sum_{j = 1}^s a_{i j} \mathbf{g}_j, \label{eqn:rk-mid-2}
\end{align}

where $\mathbf{f}_i = f(\mathbf{U}_i, \mathbf{R}_i)$ , $\mathbf{g}_i = g(\mathbf{U}_i, \mathbf{R}_i)$. Then $\mathbf{u}^{n+1}$ and $\mathbf{r}^{n+1}$ are updated by
\begin{align}
  \mathbf{u}^{n+1} &= \mathbf{u}^n + \Delta t \sum_{i = 1}^s b_i \mathbf{f}_i, \label{eqn:rk-1}\\
  \mathbf{r}^{n+1} &= \mathbf{r}^n + \Delta t \sum_{i = 1}^s b_i \mathbf{g}_i. \label{eqn:rk-2}
\end{align}

Here we only present the proof for the energy and mass conservative properties of 1D discrete system. However, readers will see it is trivial to generalize these proofs to the 2D cases.
\begin{thm}
  The scheme \eqref{eqn:rk-1} -- \eqref{eqn:rk-2} preserves the discrete global energy exactly, namely
  \begin{equation}\label{eq:energy-conservative}
    E(\mathbf{u}^{n+1}, \mathbf{r}^{n+1}) = E(\mathbf{u}^{n}, \mathbf{r}^{n}),
  \end{equation}
  if and only if the symmetric coefficient matrix $M$ with elements
  \begin{equation}\label{eq:M-matrix}
    m_{i j} = b_i a_{i j} + b_j a_{j i} - b_i b_j, \ i, j = 1, ..., s.
  \end{equation}
  is the zero matrix.
\end{thm}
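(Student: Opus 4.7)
The plan is to compute the energy increment $E(\mathbf{u}^{n+1}, \mathbf{r}^{n+1}) - E(\mathbf{u}^n, \mathbf{r}^n)$ directly by substituting the RK update formulas \eqref{eqn:rk-1}--\eqref{eqn:rk-2}, and then reorganize the result so that the $O(\Delta t)$ part is annihilated by Lemma \ref{lem1} and the leftover $O(\Delta t^2)$ part is a bilinear form in the stage slopes whose coefficient tensor is exactly $M$. First I would expand $\|D_1 \mathbf{u}^{n+1}\|^2 - \|D_1 \mathbf{u}^n\|^2$ and $\|\mathbf{r}^{n+1}\|^2 - \|\mathbf{r}^n\|^2$ using the updates. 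Writing $\|D_1 \mathbf{v}\|^2 = -\mathbf{v}^H D_1^2 \mathbf{v}$ (which uses $D_1^T = -D_1$) puts everything in the shape $\mathbf{v}^H D_1^2 \mathbf{w}$ that matches the structure of Lemma \ref{lem1}.

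The key rearrangement is to pass from $\mathbf{u}^n, \mathbf{r}^n$ to the stage values $\mathbf{U}_i, \mathbf{R}_i$ inside the linear-in-$\Delta t$ cross terms, by inverting \eqref{eqn:rk-mid-1}--\eqref{eqn:rk-mid-2}: $\mathbf{u}^n = \mathbf{U}_i - \Delta t \sum_j a_{ij}\mathbf{f}_j$ and $\mathbf{r}^n = \mathbf{R}_i - \Delta t \sum_j a_{ij} \mathbf{g}_j$. After this substitution, the coefficient of $\Delta t$ assembles into $\sum_i b_i\bigl(\Re(\mathbf{f}_i^T D_1^2 \bar{\mathbf{U}}_i) + \tfrac{\beta}{2}\mathbf{g}_i^T \mathbf{R}_i\bigr)$, which vanishes stage-by-stage by Lemma \ref{lem1}. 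All the $O(\Delta t^2)$ crumbs that the substitution sheds are collected into the remainder.

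Two sources feed the $O(\Delta t^2)$ remainder: the original $b_i b_j$ terms from squaring the update and the new $b_i a_{ij}$ terms produced by the $\mathbf{u}^n\!\mapsto\!\mathbf{U}_i$ substitution. Symmetrizing the summation indices --- legitimate because $\mathbf{f}_i^H D_1^2 \mathbf{f}_j$ and $\mathbf{f}_j^H D_1^2 \mathbf{f}_i$ are complex conjugates (since $D_1^2$ is real symmetric) and $\mathbf{g}_i^T \mathbf{g}_j$ is manifestly symmetric in $i,j$ --- collapses the remainder to exactly $-\tfrac{\Delta t^2}{2}\sum_{i,j} m_{ij}\,\mathbf{f}_i^H D_1^2 \mathbf{f}_j - \tfrac{\beta \Delta t^2}{4}\sum_{i,j} m_{ij}\,\mathbf{g}_i^T \mathbf{g}_j$. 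Sufficiency ($M = 0 \Rightarrow E(\mathbf{u}^{n+1},\mathbf{r}^{n+1})=E(\mathbf{u}^n,\mathbf{r}^n)$) is then immediate. For the converse, I would argue that by varying $\mathbf{u}^n, \mathbf{r}^n$ (and, if needed, $\beta$ and $\Delta t$) the stage slopes $\mathbf{f}_i, \mathbf{g}_i$ can be made essentially arbitrary --- for instance, taking $\mathbf{g}_i = \alpha_i \mathbf{e}$ for an arbitrary vector $\mathbf{e}$ and free scalars $\alpha_i$ turns the $\mathbf{g}$-piece into $\|\mathbf{e}\|^2\sum_{i,j}m_{ij}\alpha_i\alpha_j$, whose vanishing for every choice of $\alpha$ forces $m_{ij}=0$.

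The main obstacle I anticipate is the conjugation bookkeeping in the $\mathbf{u}$-part: switching between $\|D_1\cdot\|^2$, $(D_1\cdot, D_1\cdot)$, and $(\cdot)^H D_1^2 (\cdot)$ requires simultaneous use of the real symmetry of $D_1^2$ and the real skew-symmetry of $D_1$, and any sign slip spoils the cancellation against Lemma \ref{lem1}. The converse direction also deserves care: one must exhibit enough freedom in the input data that the bilinear remainder cannot vanish identically unless every $m_{ij}$ is zero, and one should verify that the $\mathbf{f}$-piece and the $\mathbf{g}$-piece of the remainder can be detected independently (since they can be rescaled independently through the parameter $\beta$).
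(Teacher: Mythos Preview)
Your proposal is correct and follows essentially the same route as the paper: expand the quadratic energy at step $n+1$, replace $\mathbf{u}^n,\mathbf{r}^n$ in the cross terms by $\mathbf{U}_i-\Delta t\sum_j a_{ij}\mathbf{f}_j$ and $\mathbf{R}_i-\Delta t\sum_j a_{ij}\mathbf{g}_j$, kill the $O(\Delta t)$ piece stage-by-stage via Lemma~\ref{lem1}, and symmetrize the leftover $O(\Delta t^2)$ bilinear form to expose the coefficients $m_{ij}$. Your remainder $-\tfrac{\Delta t^2}{2}\sum_{i,j}m_{ij}\mathbf{f}_i^H D_1^2\mathbf{f}_j-\tfrac{\beta\Delta t^2}{4}\sum_{i,j}m_{ij}\mathbf{g}_i^T\mathbf{g}_j$ is in fact the correct one (the paper's displayed analogue carries some evident copy-paste artifacts, e.g.\ stray $D_1^2$ factors in the $\mathbf{r}$-terms), and your discussion of the converse is more explicit than the paper's, which simply asserts that necessity ``is easily drawn'' from the remainder identity.
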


\begin{proof}
Eqns. \eqref{eqn:rk-mid-1} and \eqref{eqn:rk-1} yield
\begin{equation}\label{eq:u-between-steps}
  \begin{split}
    -\frac{1}{2} \| D_1 \mathbf{u}^{n+1} \|^2 =& - \frac{1}{2}(D_1(\mathbf{u}^n + \Delta t \sum_{i = 1}^s b_i \mathbf{f}_i),D_1(\mathbf{u}^n + \Delta t \sum_{j = 1}^s b_j \mathbf{f}_j)) \\
                                            =& \frac{1}{2} (-\| D_1 \mathbf{u}^{n} \|^2 + h\Delta t \sum_{i = 1}^s b_i \mathbf{f}_i^H D_1^2 (\Delta t \sum_{j = 1}^s a_{i j} \mathbf{f}_j - \mathbf{U}_i) + h\Delta t \sum_{i = 1}^s (\Delta t \sum_{j = 1}^s a_{i j} \mathbf{f}_j^H - \mathbf{U}_i^H) D_1^2 b_i \mathbf{f}_i \\
                                                &+ h\Delta t^2 \sum_{i, j = 1}^s b_i b_j \mathbf{f}_i^H D_1^2 \mathbf{f}_j) \\
                                            =& \frac{1}{2} (-\| D_1 \mathbf{u}^{n} \|^2 + h\Delta t^2 \sum_{i, j = 1}^s (b_i a_{i j} + b_j a_{j i} - b_i b_j) \mathbf{f}_i^H D_1^2 \mathbf{f}_j - h\Delta t
                                            \sum_{i = 1}^s b_i (\mathbf{f}_i^H D_1^2 \mathbf{U}_i + \mathbf{f}_i^T D_1^2 \overline{\mathbf{U}_i}).
  \end{split}
\end{equation}
Similarly, Eqns. \eqref{eqn:rk-mid-2} and \eqref{eqn:rk-2} give
\begin{equation}\label{eq:r-between-steps}
    \frac{\beta}{4} \| \mathbf{r}^{n+1} \|^2 = \frac{\beta}{4} (\| D_1 \mathbf{r}^{n} \|^2 + h\Delta t^2 \sum_{i, j = 1}^s (b_i a_{i j} + b_j a_{j i} - b_i b_j) \mathbf{g}_i^T D_1^2 \mathbf{g}_j - 2 h\Delta t \sum_{i = 1}^s b_i \mathbf{g}_i^T D_1^2 \mathbf{R}_i).
\end{equation}
From Lem. \ref{lem1}, we have
\begin{equation}\label{eq:lem1-1}
  \frac{1}{2} ( \mathbf{f}_i^H D_1^2 \mathbf{U}_i + \mathbf{f}_i^T D_1^2 \overline{\mathbf{U}_i}) + \frac{\beta}{2} \mathbf{g}_i^T D_1^2 \mathbf{R}_i = 0.
\end{equation}
Then by substituting Eq. \eqref{eq:lem1-1} into the sum of \eqref{eq:u-between-steps} with \eqref{eq:r-between-steps} we have
\begin{equation}\label{eq:energy-mid}
  E(\mathbf{u}^{n+1}, \mathbf{r}^{n+1}) = E(\mathbf{u}^{n}, \mathbf{r}^{n}) + h\Delta t^2 \sum_{i, j = 1}^s  (b_i a_{i j} + b_j a_{j i} - b_i b_j)(\mathbf{f}_i^H D_1^2 \mathbf{f}_j + \mathbf{g}_i^T D_1^2 \mathbf{g}_j).
\end{equation}
Finally, substituting Eq. \eqref{eq:M-matrix} into \eqref{eq:energy-mid} gives \eqref{eq:energy-conservative}.

Meanwhile, the proof of necessity is easily drown from Eq. \eqref{eq:energy-mid}. So the proof is completed.
\end{proof}

\begin{rem}
  The energy conservation law \eqref{eq:energy-conservative} is a so-called $\mathbf{S}$-conservative property:
\begin{equation}
  \label{eq:3}
  \begin{pmatrix} (\mathbf{u}^{n+1})^{H} & (\mathbf{r}^{n+1})^T \end{pmatrix} \mathbf{S}
  \begin{pmatrix} \mathbf{u}^{n+1} \\ \mathbf{r}^{n+1} \end{pmatrix} =
  \begin{pmatrix} (\mathbf{u}^n)^H & (\mathbf{r}^n)^T \end{pmatrix} \mathbf{S}
    \begin{pmatrix} \mathbf{u}^n \\ \mathbf{r}^n \end{pmatrix},
\end{equation}
where $\mathbf{S}$ is a real symmetric matrix denoted by
\begin{equation}
\label{eq:4}
\mathbf{S} = \begin{pmatrix}
  \frac{1}{2}D_1^2 & O\\
  O & \frac{\beta}{4}I_N
  \end{pmatrix}.
\end{equation}
\end{rem}

\begin{thm}
  Scheme \eqref{eqn:rk-1} -- \eqref{eqn:rk-2} preserves the discrete mass exactly, namely
  \begin{equation}\label{eq:mass}
    M(\mathbf{u}^{n+1}, \mathbf{r}^{n+1}) = M(\mathbf{u}^{n}, \mathbf{r}^{n}),
  \end{equation}
  if and only if the symmetric coefficient matrix $M$ is the zero matrix.
\end{thm}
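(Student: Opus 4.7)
The plan is to imitate the proof of the preceding energy theorem, replacing the role of Lemma \ref{lem1} by the analogous ``mass'' identity that drives Theorem \ref{thm:semi-mass}. First I would expand
\begin{equation*}
\|\mathbf{u}^{n+1}\|^{2} = \Bigl(\mathbf{u}^{n} + \Delta t\sum_{i=1}^{s} b_i \mathbf{f}_i,\; \mathbf{u}^{n} + \Delta t\sum_{j=1}^{s} b_j \mathbf{f}_j\Bigr),
\end{equation*}
and use sesquilinearity together with the reality of the $b_i$ to combine the two linear-in-$\Delta t$ cross terms into $2\Delta t\,\Re\sum_i b_i(\mathbf{f}_i,\mathbf{u}^{n})$, leaving a quadratic remainder $\Delta t^{2}\sum_{i,j} b_i b_j(\mathbf{f}_i,\mathbf{f}_j)$.

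Second, I would eliminate $\mathbf{u}^{n}$ in the linear term using the stage equation \eqref{eqn:rk-mid-1}, writing $\mathbf{u}^{n}=\mathbf{U}_i-\Delta t\sum_j a_{ij}\mathbf{f}_j$. The resulting ``stage'' contribution $2\Delta t\,\Re\sum_i b_i(\mathbf{f}_i,\mathbf{U}_i)$ vanishes identically: this is the mass analog of Lemma \ref{lem1}, and is immediate from $\mathbf{f}_i = \mathrm{i}D_1^{2}\mathbf{U}_i + \mathrm{i}\beta\,\mathbf{R}_i\cdot\mathbf{U}_i$ together with $D_1^{2}$ being real symmetric and $\mathbf{R}_i\in\mathbb{R}^{N}$ (the latter because $g$ is real-valued, so $\mathbf{R}_i$ inherits reality from $\mathbf{r}^{n}$). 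Hence only $\mathcal{O}(\Delta t^{2})$ terms survive, and the standard symmetrization $i\leftrightarrow j$ in the conjugate half of the double sum turns $-2\Delta t^{2}\,\Re\sum_{i,j}b_i a_{ij}(\mathbf{f}_i,\mathbf{f}_j)$ into $-\Delta t^{2}\sum_{i,j}(b_i a_{ij}+b_j a_{ji})(\mathbf{f}_i,\mathbf{f}_j)$.

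Combining the two $\mathcal{O}(\Delta t^{2})$ contributions yields
\begin{equation*}
\|\mathbf{u}^{n+1}\|^{2} - \|\mathbf{u}^{n}\|^{2} = -\Delta t^{2}\sum_{i,j=1}^{s} m_{ij}\,(\mathbf{f}_i,\mathbf{f}_j),
\end{equation*}
which is the mass counterpart of \eqref{eq:energy-mid}. Sufficiency follows at once: if every $m_{ij}=0$ the right-hand side vanishes. Necessity is handled exactly as in the energy theorem by arguing that this identity must hold for arbitrary data $\mathbf{u}^{n},\mathbf{r}^{n}$, so that a generic choice forces each $m_{ij}=0$.

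The step I expect to be the main obstacle is precisely this last one: justifying rigorously that vanishing of the bilinear form $\sum m_{ij}(\mathbf{f}_i,\mathbf{f}_j)$ over all admissible inputs forces the symmetric matrix with entries $m_{ij}$ to be zero. The authors dispatch the analogous step in one line for the energy theorem, and I would mirror that brief argument here by tuning $\mathbf{u}^{n}$ (and thereby the stage Gram matrix $[(\mathbf{f}_i,\mathbf{f}_j)]$) to be sufficiently generic, so that the pairing $M\mapsto\sum m_{ij}(\mathbf{f}_i,\mathbf{f}_j)$ can only vanish universally when every $m_{ij}$ is itself zero. Everything else is an algebraic rearrangement already rehearsed in the energy-conservation proof.
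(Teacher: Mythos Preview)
Your proposal is correct and follows essentially the same approach as the paper: expand $\|\mathbf{u}^{n+1}\|^2$, replace $\mathbf{u}^n$ via the stage relation \eqref{eqn:rk-mid-1}, kill the linear-in-$\Delta t$ stage term using $\Re(\mathbf{f}_i,\mathbf{U}_i)=0$ (the content of Theorem \ref{thm:semi-mass}), and collect the remaining $\Delta t^2$ terms into $\sum_{i,j} m_{ij}(\mathbf{f}_i,\mathbf{f}_j)$. The paper's proof is even terser---it simply substitutes $I_N$ for $D_1^2$ in \eqref{eq:u-between-steps} and invokes \eqref{eq:semi-mass-2}---and handles necessity with the same one-line appeal to genericity that you flag as the only delicate point.
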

\begin{proof}
Substituting the matrix $D_1^2$ in Eq. \eqref{eq:u-between-steps} with $I_N$ gives
\begin{equation}
  \label{eq:mass-1}
  \| \mathbf{u}^{n+1} \|^2 =  \| \mathbf{u}^{n} \|^2 + h\Delta t^2 \sum_{i, j = 1}^s (b_i a_{i j} + b_j a_{j i} - b_i b_j) \mathbf{f}_i^H \mathbf{f}_j - h\Delta t \sum_{i = 1}^s b_i (\mathbf{f}_i^H \mathbf{U}_i +
  \mathbf{f}_i^T \overline{\mathbf{U}_i}).
\end{equation}
Combining Eq. \eqref{eq:mass-1} with \eqref{eq:semi-mass-2} yields
\begin{equation}
\label{eq:mass-2}
  \| \mathbf{u}^{n+1} \|^2 =  \| \mathbf{u}^{n} \|^2 + h\Delta t^2 \sum_{i, j = 1}^s m_{i j} \mathbf{f}_i^H \mathbf{f}_j,
\end{equation}
which complete the proof.
\end{proof}
With $\tilde {\mathbf{S}}=I_N$, the discrete mass conservaion law is another $\tilde {\mathbf{S}}$-conservative law that scheme \eqref{eqn:rk-1} -- \eqref{eqn:rk-2} admits.
\begin{thm}\rm{(\cite{feng2010symplectic})}
The diagonally implicit Runge--Kutta (DIRK) schemes satisfying the condition \eqref{eq:M-matrix} can be expressed in Butcher notation by the table of coefficients,
\begin{equation}
	\label{eqn:dirk}
	\begin{array}{c|c}
		c & A \\
		\hline \\
		& b^T
	\end{array}
	=
	\begin{array}{c|cccc}
		c_1 & \frac{b_1}{2} &  						  &  &\\
		c_2 & b_1 					& \frac{b_2}{2} &  & \\
		\vdots& \vdots 				& \vdots 				& \ddots & \\
		c_s & b_1 				  & b_2 				  & \cdots & \frac{b_s}{2} \\
		\hline\\
		&b_1 & b_2 		& \cdots 				& b_s
	\end{array}
\end{equation}
where $c_i = \sum_{j = 1}^s a_{i j}$, and $b_i \neq 0$ ($i = 1, 2, \cdots, s$), $a_{ij} = 0 (i<j)$.
\end{thm}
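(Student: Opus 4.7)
The plan is to read off every nonzero entry of the Butcher tableau directly from the constraint $M = 0$ given in \eqref{eq:M-matrix}, using the DIRK lower-triangular structure $a_{ij} = 0$ for $i < j$ to simplify. Only the diagonal entries $a_{ii}$ and the strictly lower-triangular entries $a_{ij}$ with $i > j$ remain free, so these are what the computation must pin down.

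First, take $i = j$ in \eqref{eq:M-matrix}: the formula collapses to $m_{ii} = 2 b_i a_{ii} - b_i^2$, which vanishes only when $a_{ii} = b_i / 2$ (using the hypothesis $b_i \neq 0$). This recovers the diagonal entries of the claimed tableau. Next, for $i > j$, the DIRK hypothesis forces $a_{ji} = 0$, so that $m_{ij} = b_i a_{ij} + b_j \cdot 0 - b_i b_j$ vanishes precisely when $a_{ij} = b_j$. The defining relation $c_i = \sum_{j = 1}^{s} a_{ij}$ is just a definition and requires no separate verification.

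There is essentially no substantial obstacle here: the argument is a two-line algebraic exercise whose only subtlety is remembering to invoke $b_i \neq 0$ before dividing, and to treat the pair $(i,j)$ together with $(j,i)$ only once since $m_{ij}$ is already symmetric by construction. The theorem should be read as a structural observation --- once lower triangularity and the vanishing of $M$ are imposed, the full Butcher matrix $A$ is determined by the weights $b_i$ alone, with the diagonal fixed to $b_i/2$ and all strictly sub-diagonal entries in column $j$ forced to equal $b_j$.
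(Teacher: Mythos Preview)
Your argument is correct: setting $i=j$ in the condition $m_{ij}=b_ia_{ij}+b_ja_{ji}-b_ib_j=0$ together with $b_i\neq 0$ forces $a_{ii}=b_i/2$, and for $i>j$ the DIRK condition $a_{ji}=0$ reduces the equation to $b_ia_{ij}-b_ib_j=0$, giving $a_{ij}=b_j$. That is precisely the tableau \eqref{eqn:dirk}.

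Note, however, that the paper does not supply its own proof of this theorem at all --- it is quoted as a known result from \cite{feng2010symplectic} and stated without argument. So there is nothing to compare your derivation against in the paper itself; you have simply written out the standard two-line verification that the cited reference presumably contains. Your remark that the assumption $b_i\neq 0$ is essential for the division step is the only point that needs care, and you have handled it.
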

\textit{$s$-stage IEQ-DIRK scheme}. By applying this series of coefficients the discrete system can be transformed into diagonally implicit Runge--Kutta schemes
\begin{align}
  \mathbf{U}_i = \mathbf{u}^n + \Delta t \sum_{j = 1}^i a_{i j} \mathbf{f}_j \label{eqn:rk-ieq-1},\\
  \mathbf{R}_i = \mathbf{r}^n + \Delta t \sum_{j = 1}^i a_{i j} \mathbf{g}_j \label{eqn:rk-ieq-2},
\end{align}
and each pair of $(\mathbf{U}_i \ \mathbf{R}_i)$ are calculated in order, which considerably contribute to the effciency of the IEQ-RK schemes.

 The quadratic invariant-preserving Runge--Kutta schemes have been further discussed by many papers \cite{sanz1991order,franco2003fourth,kalogiratou2012diagonally}. Below we present several DIRK schemes that satisfies \eqref{eqn:dirk} and will be used in our work.

\begin{table}[h!]
	\caption{The value of $b_i$ for different stages of DIRK satisfying \eqref{eqn:dirk}.}
\begin{center}
\begin{tabular}{c|ccc}
	\hline
	\raisebox{-1.6ex}[0pt]{(Stage, convergence order)} & $b_1$ & $b_2$ & $b_3$  \\
	& $b_4$ & $b_5$ & $b_6$ \\  \hline
 (1, 2)\cite{feng2010symplectic} & $1$ & & \\ \hline
 (2, 2)\cite{feng2010symplectic} & $\frac{1}{2}$ & $\frac{1}{2}$ &  \\ \hline
 (3, 3)\cite{feng2010symplectic} & $1.351207$ & $1.351207$ & $-1.702414$  \\ \hline
 \raisebox{-1.6ex}[0pt]{(4, 4)\cite{feng2010symplectic}} & $2.70309412$ & $-0.53652708$ & $2.37893931$  \\
 & $1.8606818856$ & & \\  \hline
 \raisebox{-1.6ex}[0pt]{(5, 4)\cite{kalogiratou2012diagonally}} & $-2.150611289942181$ & $1.452223059167718$ & $2.3967764615489258$ \\
 & $1.452223059167718$ & $-2.150611289942181$ &  \\  \hline
 \raisebox{-1.6ex}[0pt]{(6, 5)\cite{kalogiratou2012diagonally}} & $0.5080048194000274$ & $1.360107162294827$ & $2.0192933591817224$  \\
 & $0.5685658926458251$ & $-1.4598520495864393$ & $-1.9961191839359627$ \\
 \hline
\end{tabular}
\end{center}
\label{tab:butcher}
\end{table}

\section{Numerical experiment}\label{sec:5}
\textbf{Example 5.}(Travelling wave solution) In this experiment, we verify the convergence orders and testify the mass and energy conservative properties of proposed schemes. The initial condition is
\begin{equation}
  u_0 = \sech(x)\exp(2\mathrm{i}x).
\end{equation}
With $\beta = 2$, this system admits an exact single-soliton solution
\begin{equation}
  u(x, t) = \sech(x-4t)\exp(\mathrm{i}(2x-3t)).
\end{equation}
First, let $N = 256$, $T =3$ and $\Delta t=0.01$. The profiles of numerical solutions obtained by IEQ-DIRK(2, 2) and IEQ--DIRK(4, 4) are demonstrated in Fig. \ref{fig:schr-profiles} as examples, along with the real solution
for comparison. It is shown that both the low and high order IEQ-DIRK schemes simulate Eqns. \eqref{eqn:nlse} -- \eqref{boundary_condition} accurately. In addition, the mass and energy errors of
different stages IEQ-DIRK schemes are presented
in Fig. \ref{fig:schr-mass-energy}, which are all controlled in a same extremely low magnitude, from which we conclude that the proposed IEQ-DIRK
schemes ranging from 2nd to 5th order preserve these two invariants perfectly, since the inaccuracy is negligible (less than $10^{-14}$) and caused by the lower bound of iteration errors.

\begin{figure}[htbp]
\centering
\subfloat[real]{\includegraphics[width=.32\textwidth]{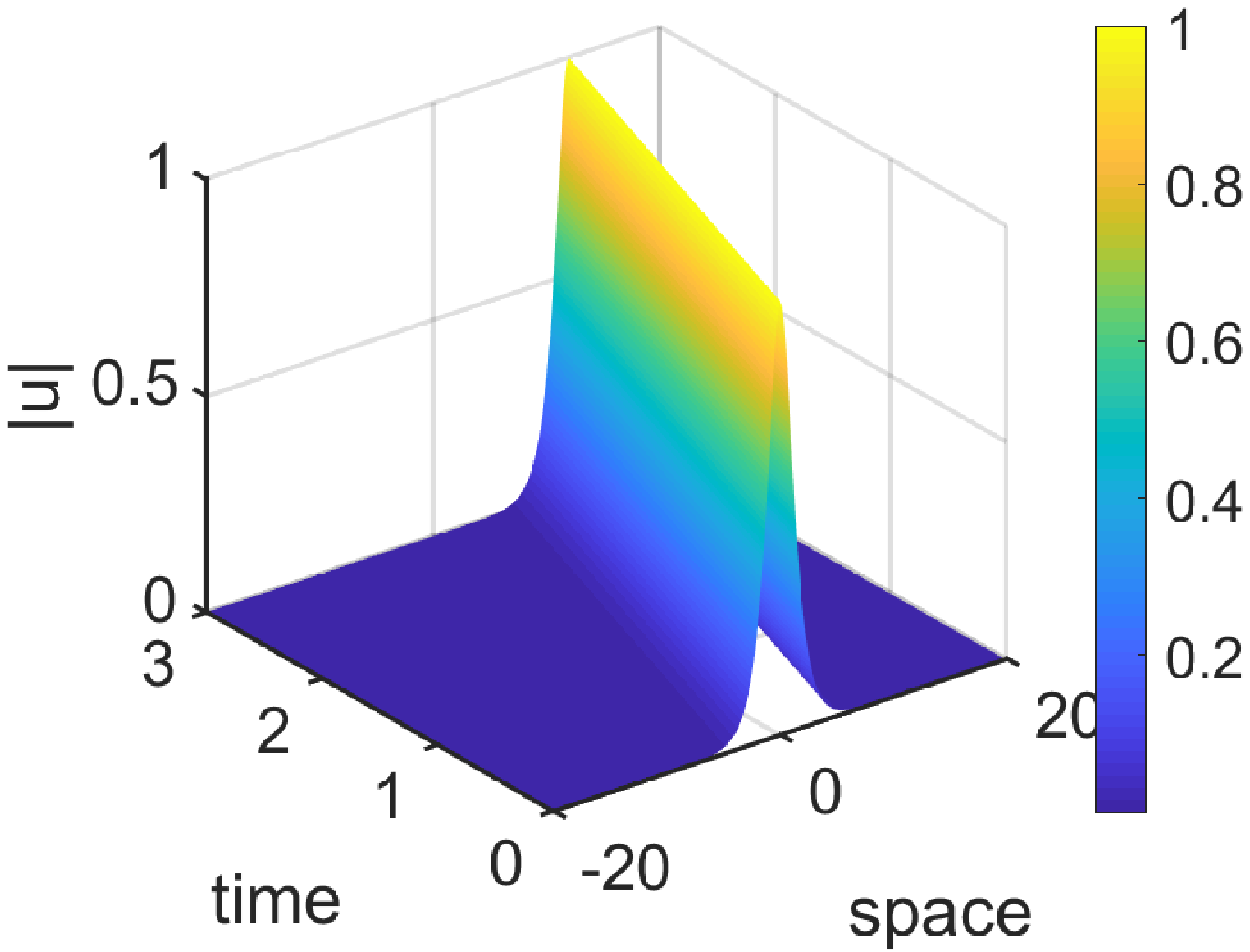}}
\subfloat[RK(2,2)]{\includegraphics[width=.32\textwidth]{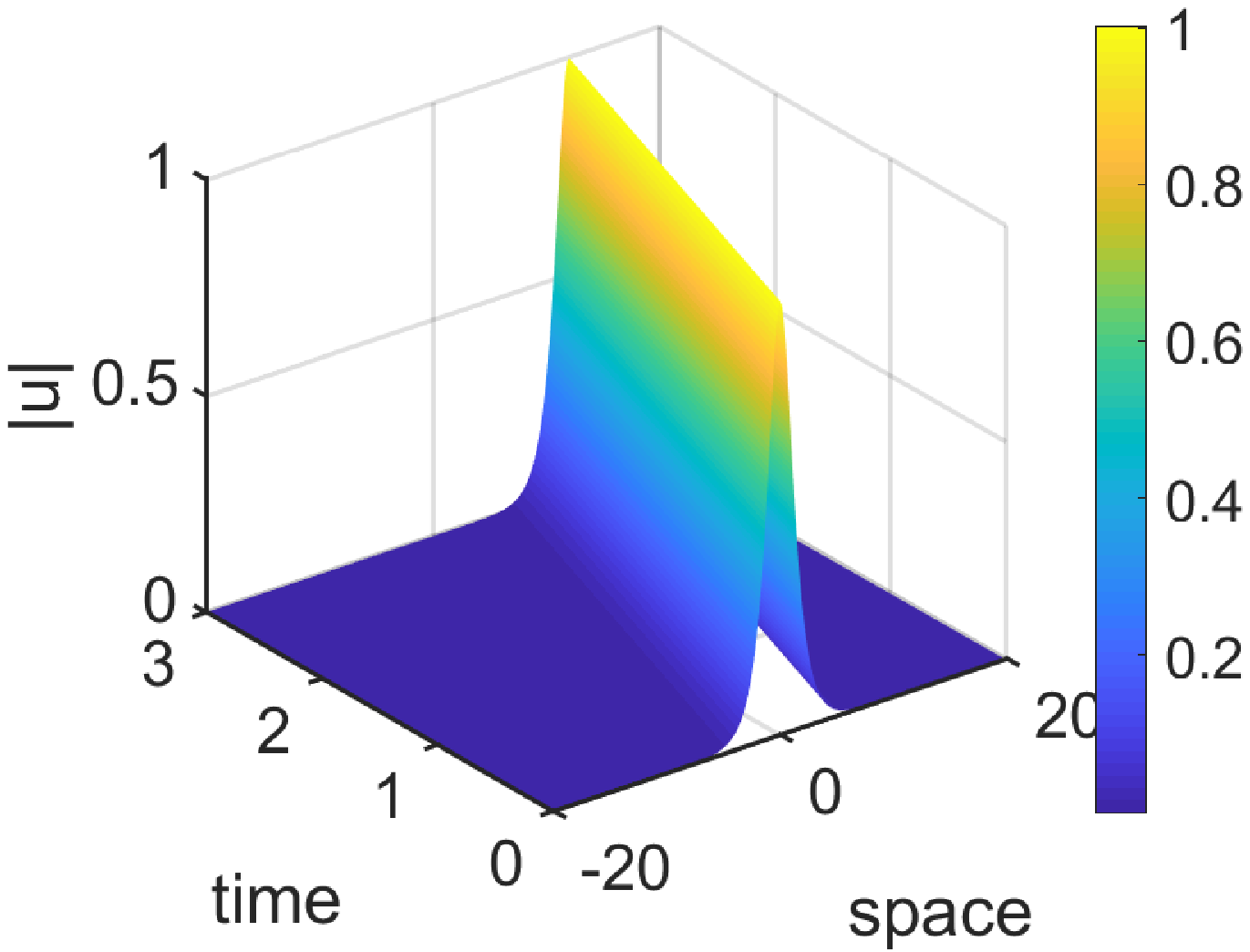}}
\subfloat[RK(4,4)]{\includegraphics[width=.32\textwidth]{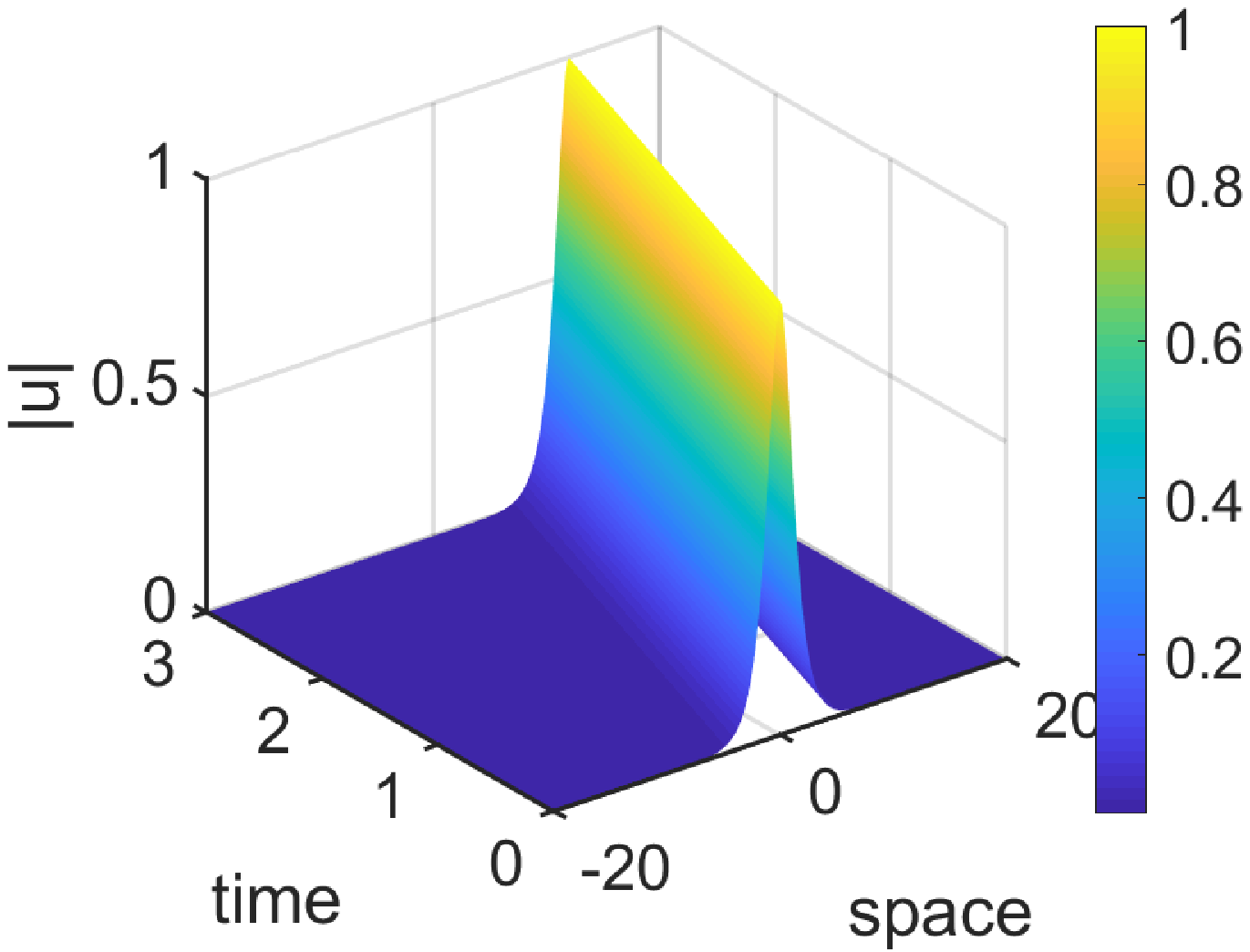}}
\caption{The profiles of real solution, and of numerical solutions of IEQ-DIRK(2, 2) and IEQ--DIRK(4, 4).}
\label{fig:schr-profiles}
\end{figure}
\begin{figure}[htbp]
\centering
\subfloat[mass error]{\includegraphics[width=.48\textwidth]{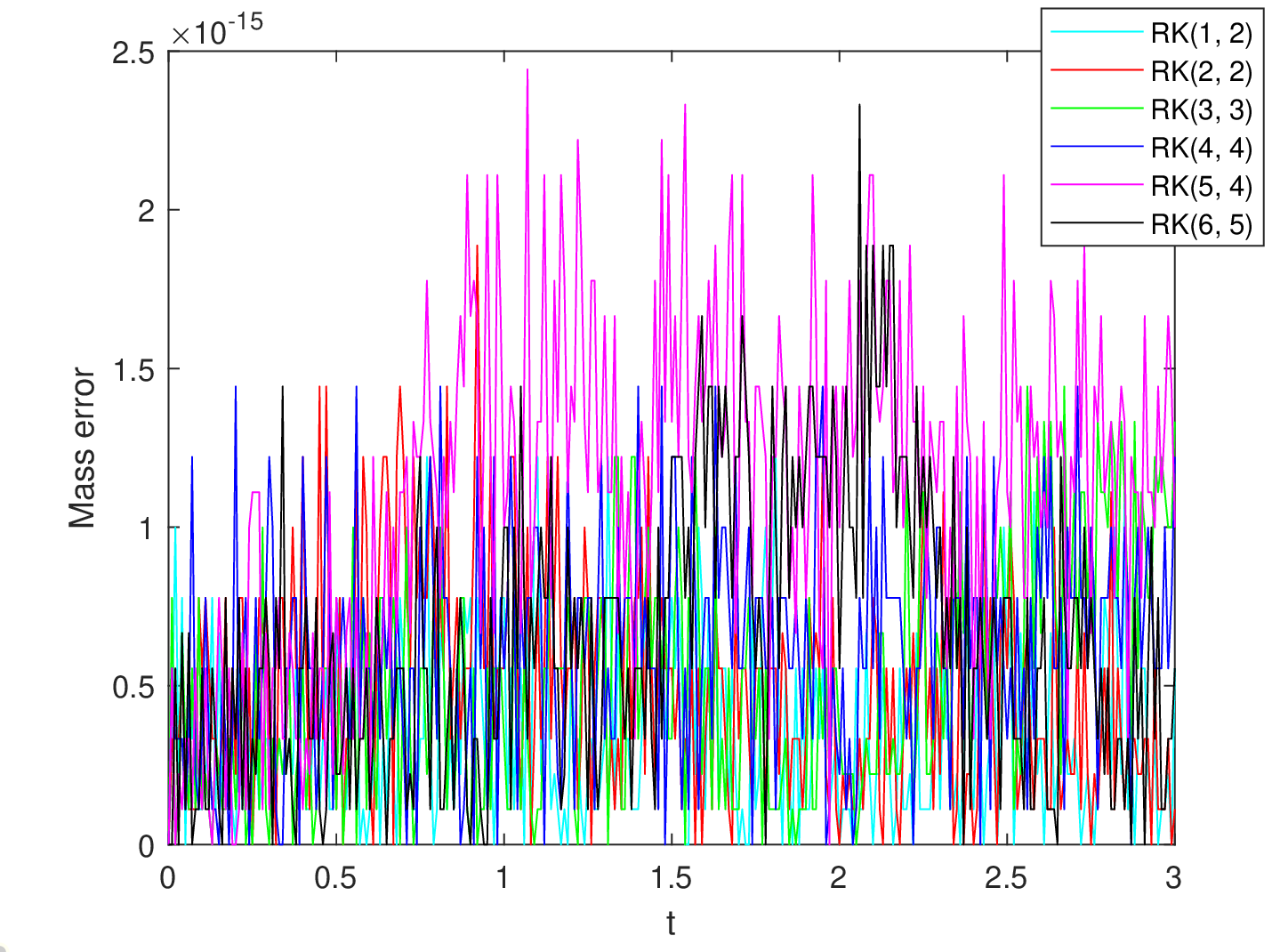}}
\subfloat[modified energy error]{\includegraphics[width=.48\textwidth]{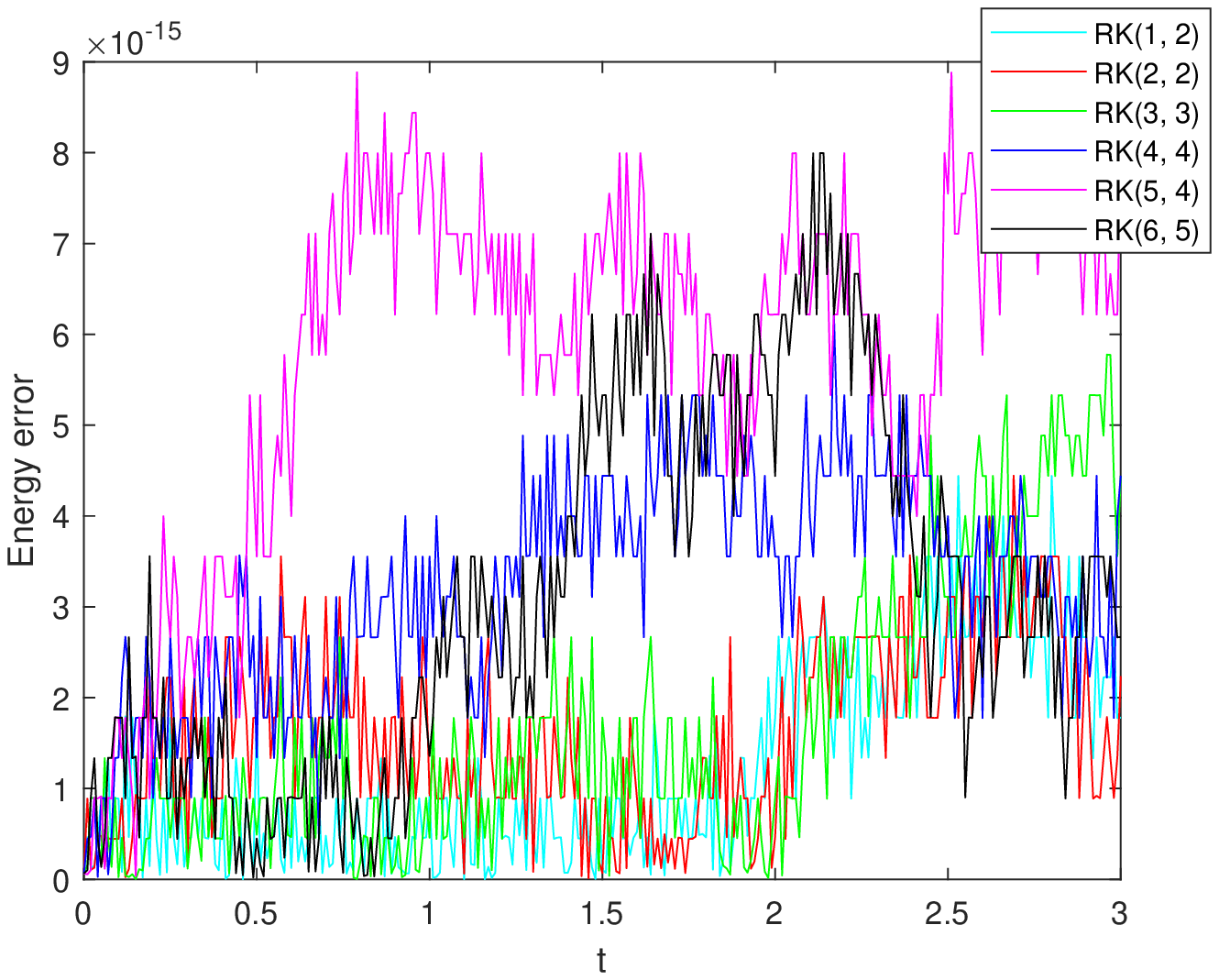}}
\caption{mass and energy error of different stages.}
\label{fig:schr-mass-energy}
\end{figure}

In order to testify the the the $L^2$ convergence order of the IEQ-DIRK schemes in the time direction, we fix $N = 256$, $T = 2^{-5}$, and the $\Delta t$ ranging from $2^{-9}$ to $2^{-6}$. The results are presented in a
log-log plot in Fig. \ref{fig:schr-time-order}. According to most of the samples, with coefficients given in Tab. \ref{tab:butcher}, the numerical convergence orders of IEQ-DIRK schemes agree with the theoretical orders.
\begin{figure}[htbp]
\centering
    {\includegraphics[width=.48\textwidth]{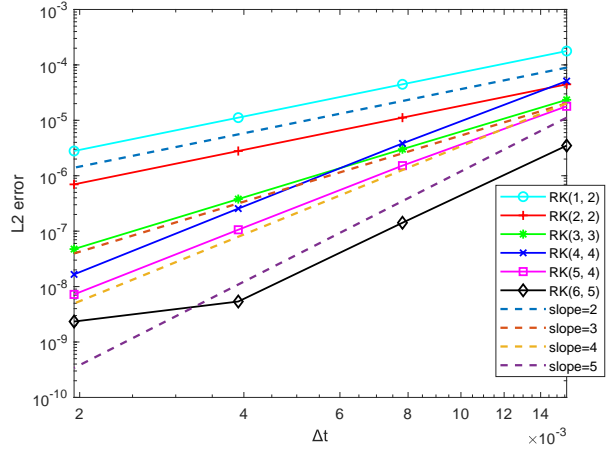}}
\caption{$L^2$ errors of IEQ-DIRK schemes.}
\label{fig:schr-time-order}
\end{figure}

However, the longtime simulating stability is still not demonstrated due to the short fixed time. Correspondingly, we set the initial condition as
\begin{equation}
  \label{eq:6}
   u_0 = \frac{1}{\sqrt{2}}\sech(\frac{1}{\sqrt{2}}(x-25))\exp(-\mathrm{i}\frac{x}{20}).
 \end{equation}
 and let $T = 1000$, $\Delta t=0.01$. The profiles and numerical errors of mass and energy for the computed by IEQ-DIRK(2, 2) and IEQ--DIRK(4, 4) are presented in Fig. \ref{fig:longtime}, which can be regarded as a proof of the rubust property of IEQ-DIRK scheme in longtime simulation.

\begin{figure}[htbp]
  \centering
\subfloat[profile]{\includegraphics[width=.32\textwidth]{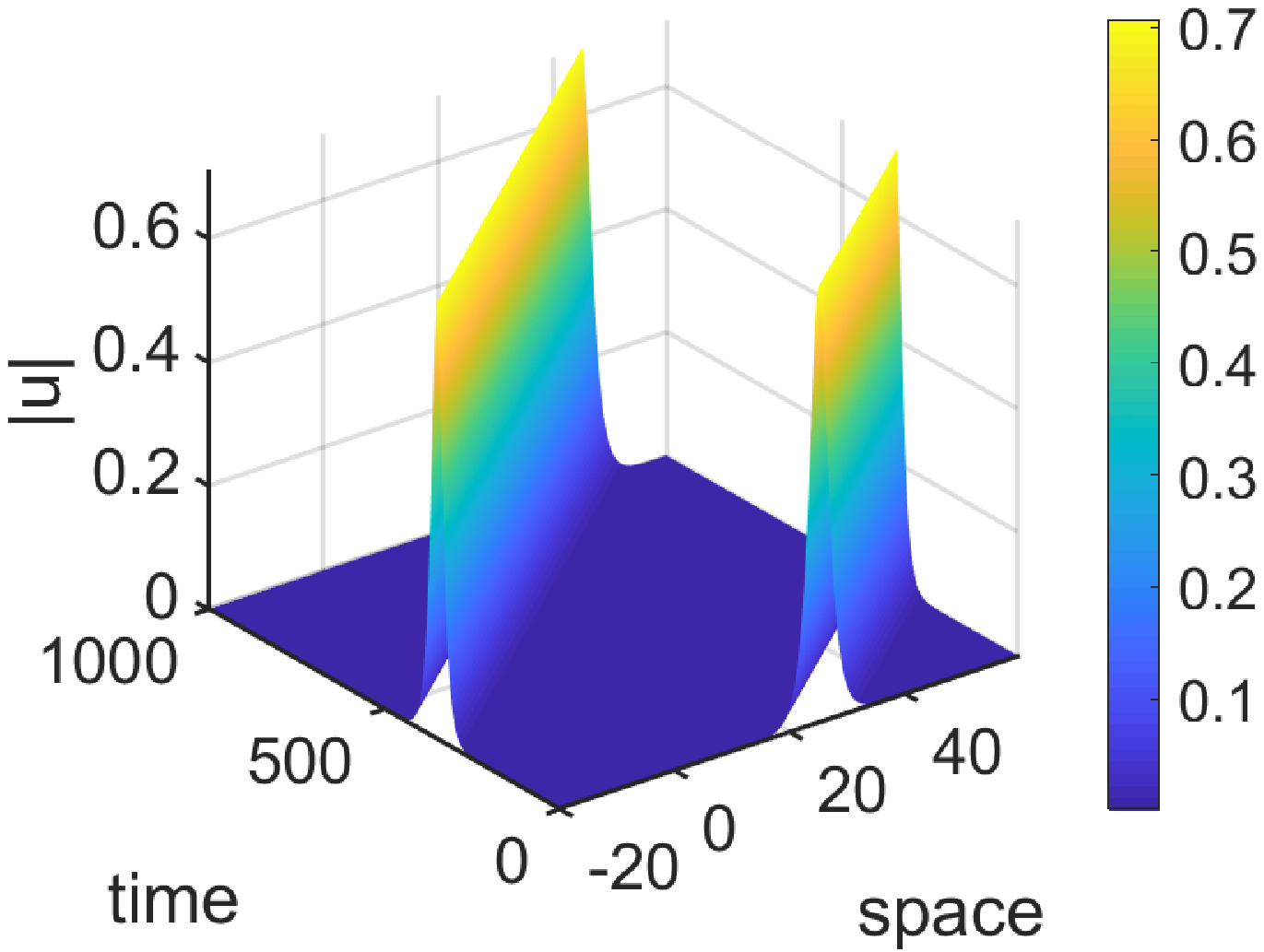}}
\subfloat[mass error]{\includegraphics[width=.32\textwidth]{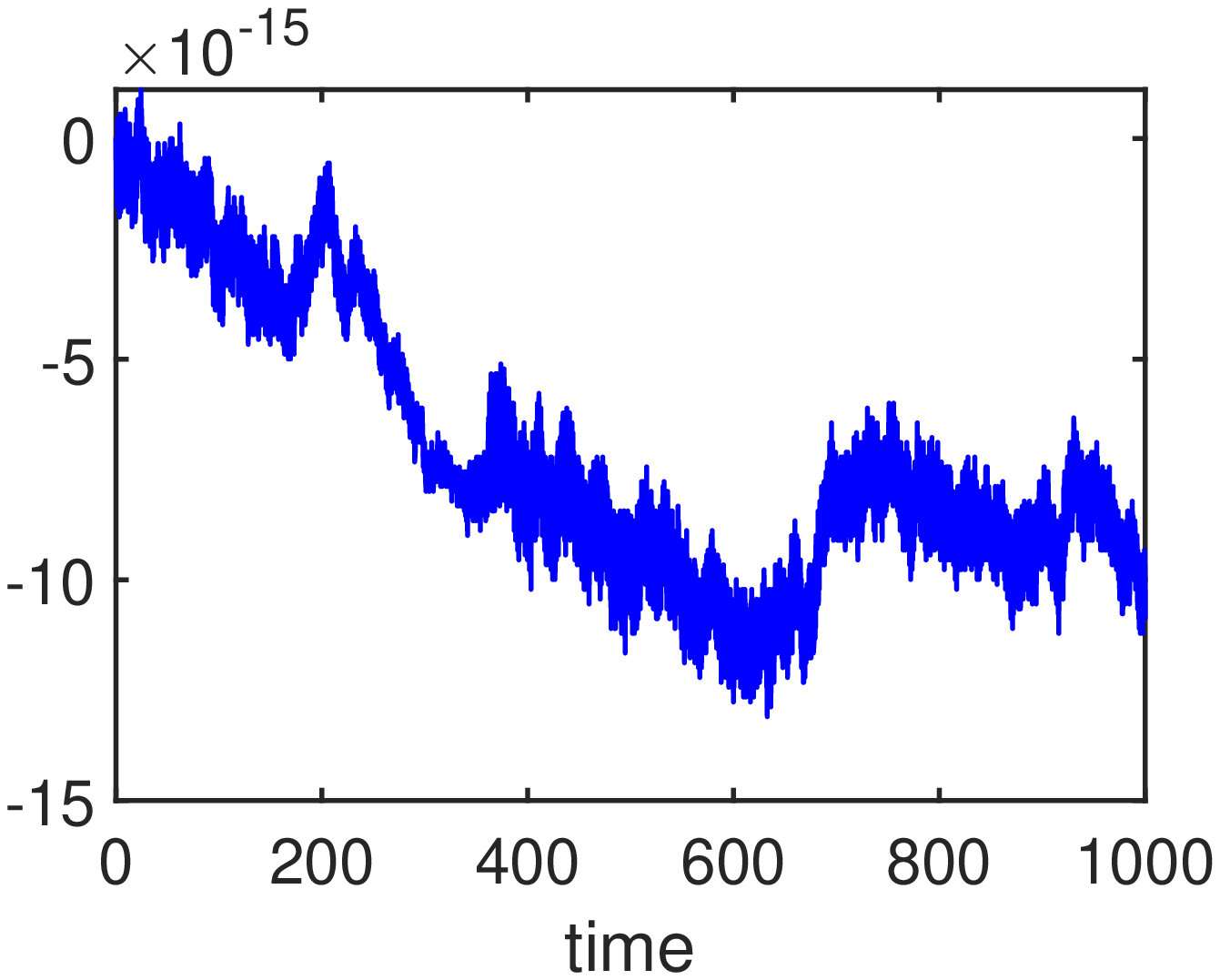}}
\subfloat[energy error]{\includegraphics[width=.32\textwidth]{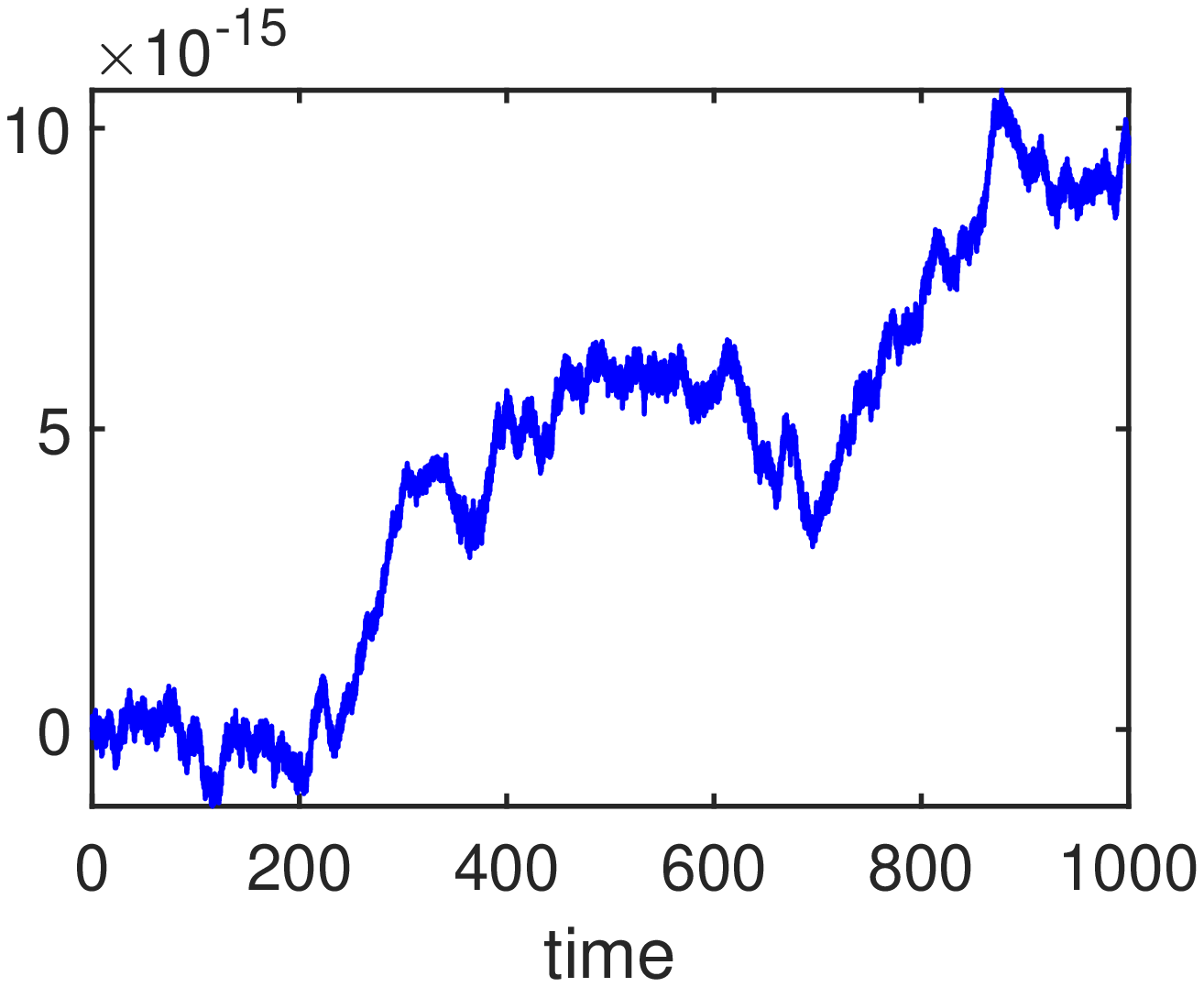}}

\subfloat[profile]{\includegraphics[width=.32\textwidth]{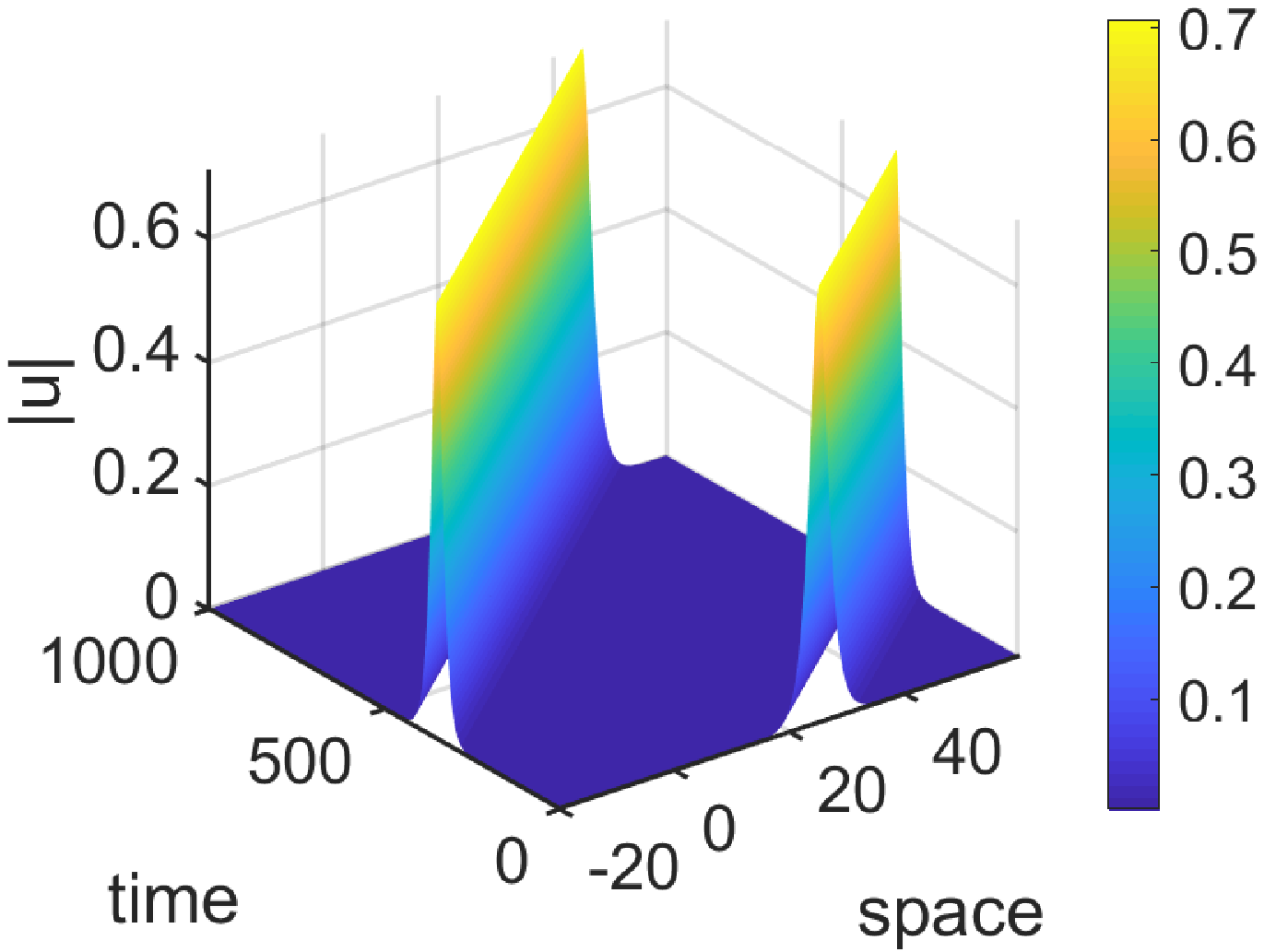}}
\subfloat[mass error]{\includegraphics[width=.32\textwidth]{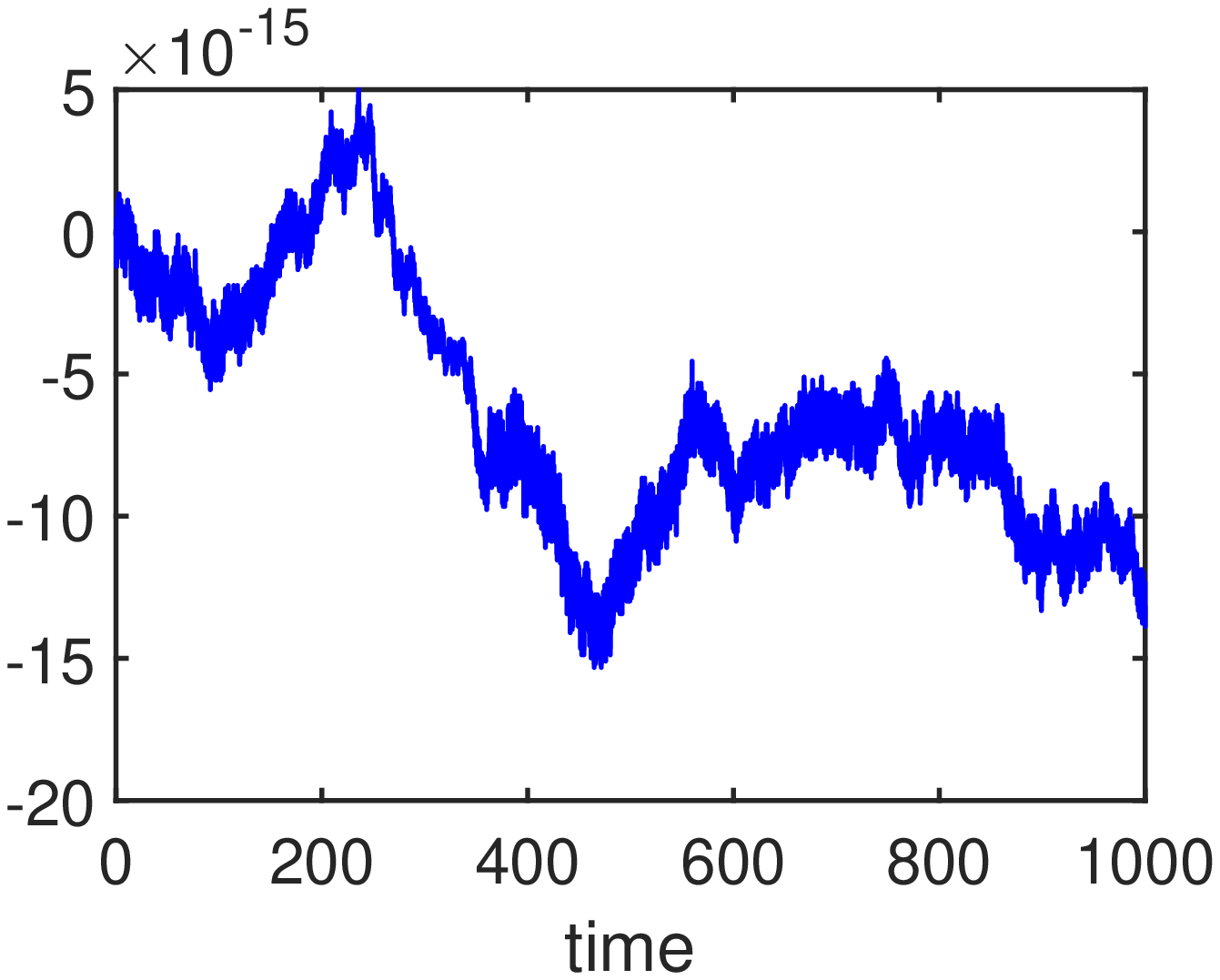}}
\subfloat[energy error]{\includegraphics[width=.32\textwidth]{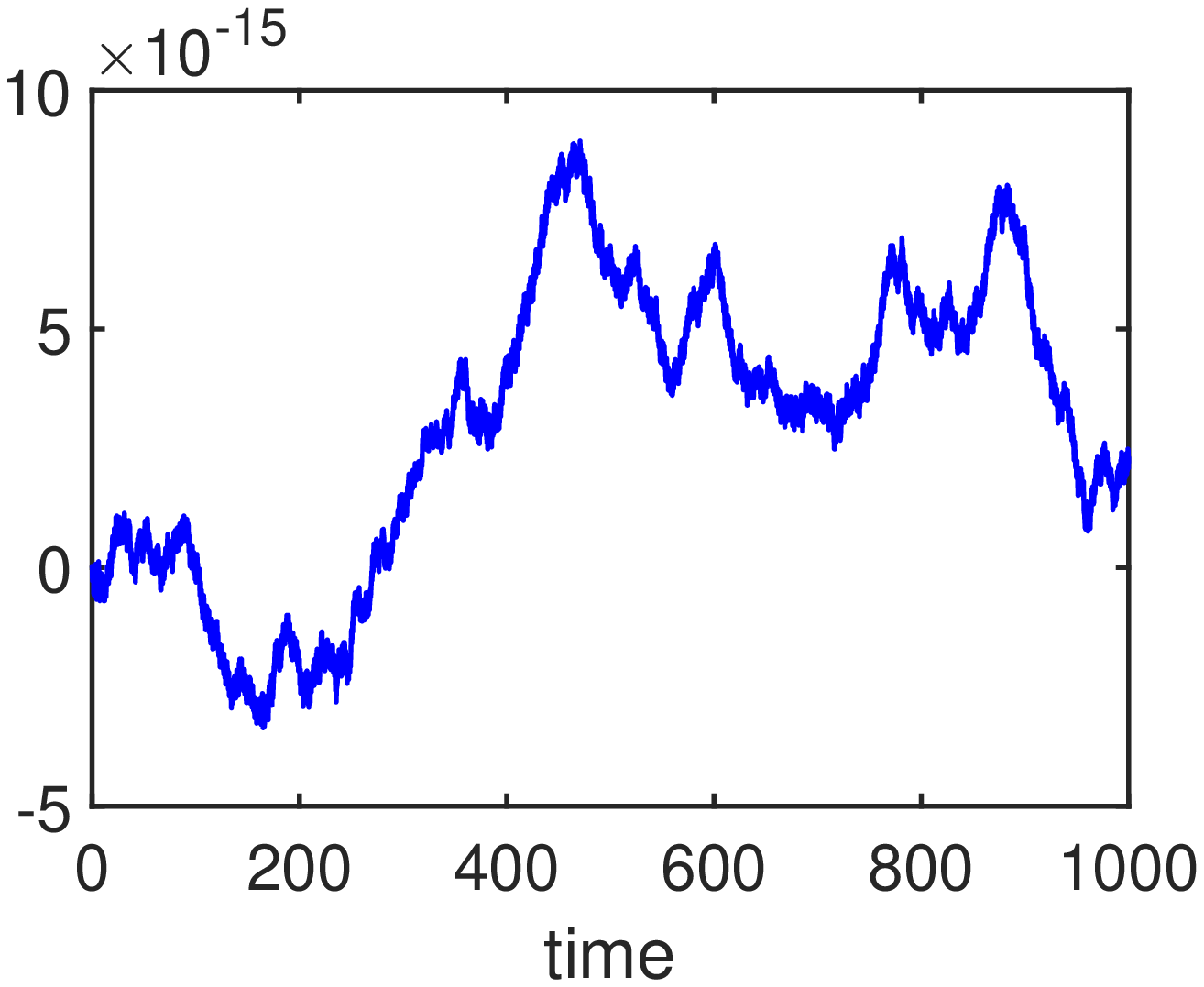}}
\caption{The profiles and numerical errors of mass and energy in longtime simulation by IEQ-DIRK(2, 2) (upper) and IEQ-DIRK(4, 4) (lower).}
  \label{fig:longtime}
\end{figure}

\textbf{Example 2.}(Singular solution in 2D danamics)
In this example, we consider the 2D NLS equation \eqref{2dim-nls} -- \eqref{2dim-bound} with
\begin{equation}
\label{exp:2dim-init}
  u_0(x, y) = (1+\sin x)(2+\sin y), (x, y) \in [0, 2\pi] \times [0, 2\pi].
\end{equation}
Let $\beta = 1$, $N_x = N_y = 128$ and $\Delta t = 0.0001$. We choose IEQ-DIRK(3, 3) to compute the numerical solution. The profiles and corresponding contours at $t = 0$ and $t = 0.108$ are
presented in Figs. \ref{fig:2d-bgn} and \ref{fig:2d-end}. The result of the singular solution is similar with
that obtained by discontinuous Galerkin method \cite{xu2005local}, and shows that the
singularities are captured very well. What's more, although there is a rapid increase on $|u_t|$ when singularities appear, the mass and energy errors shown in Fig. \ref{fig:2d-error} are still accurately controlled when compared
with those of
other non-energy-conserving methods \cite{zhu2011symplectic}. It is a cogent evidence for the conservative properties of IEQ--DIRK schemes.

\begin{figure}[htbp]
  \centering
\subfloat[the profile at $t = 0$]{\includegraphics[width=.48\textwidth]{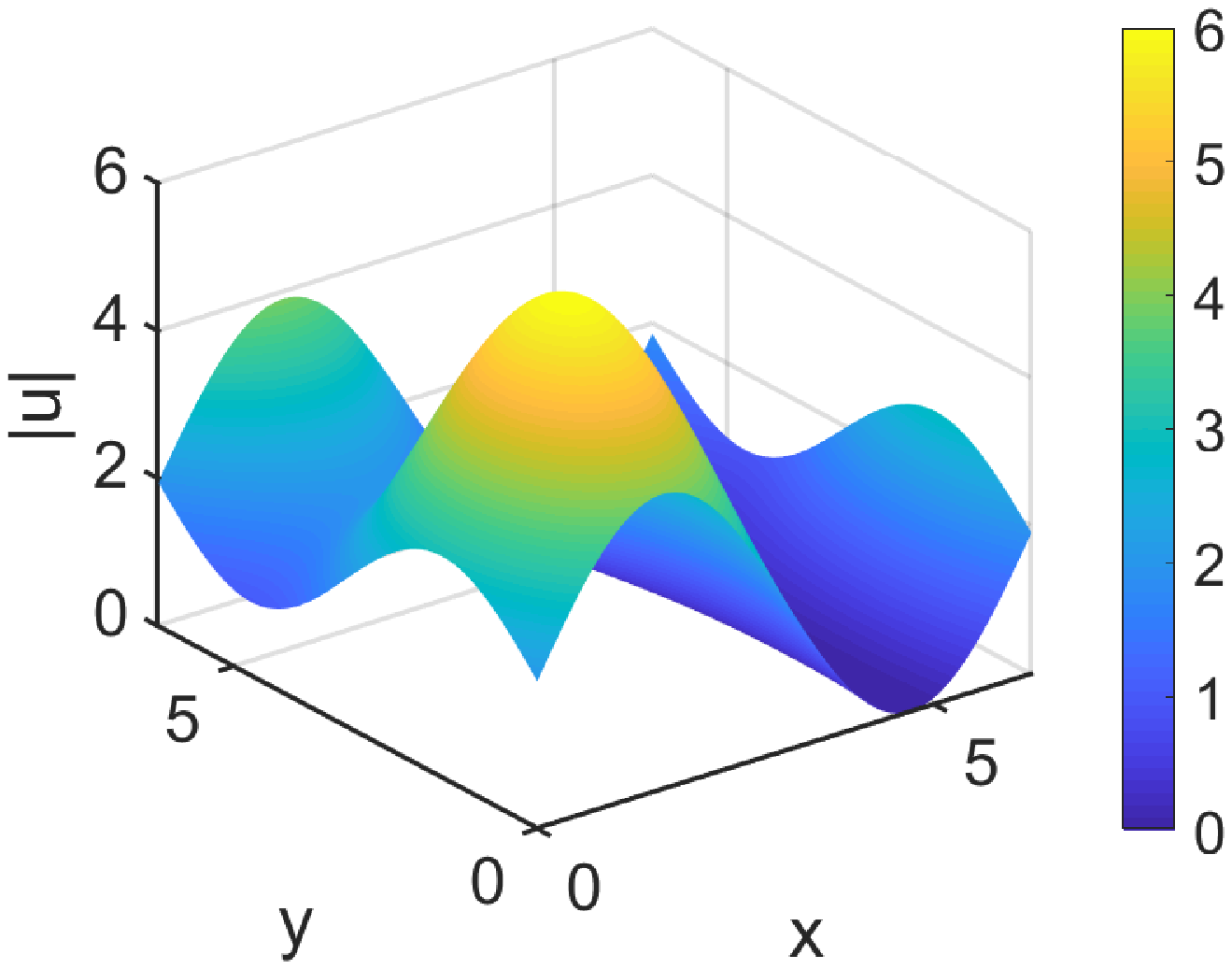}}
\subfloat[the coutour at $t = 0$]{\includegraphics[width=.48\textwidth]{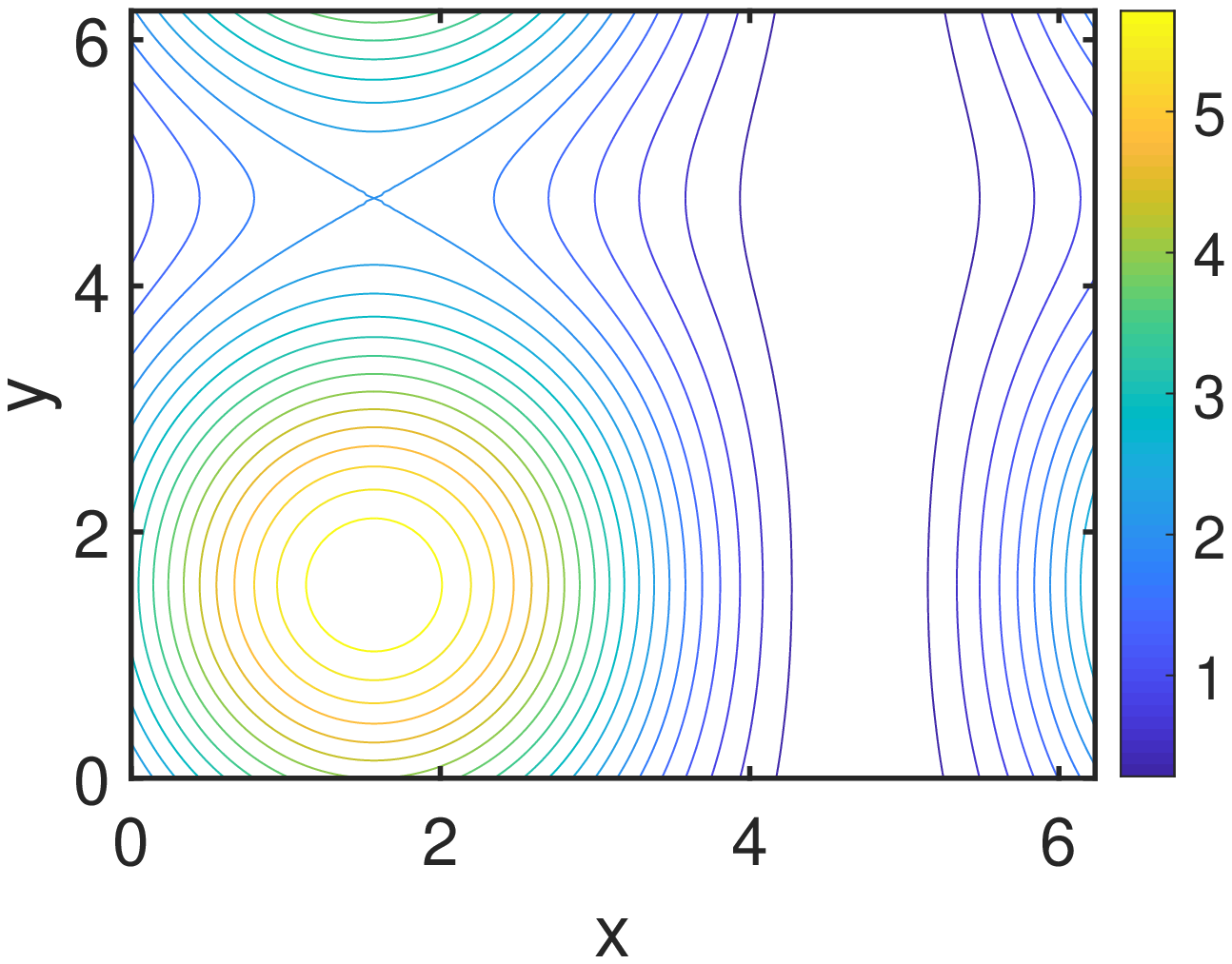}}
\caption{Profile and contour of singular solution at $t = 0$ with initial condition Eq. \eqref{exp:2dim-init}.}
  \label{fig:2d-bgn}
\end{figure}

\begin{figure}[htbp]
  \centering
\subfloat[the profile at $t = 0.108$]{\includegraphics[width=.48\textwidth]{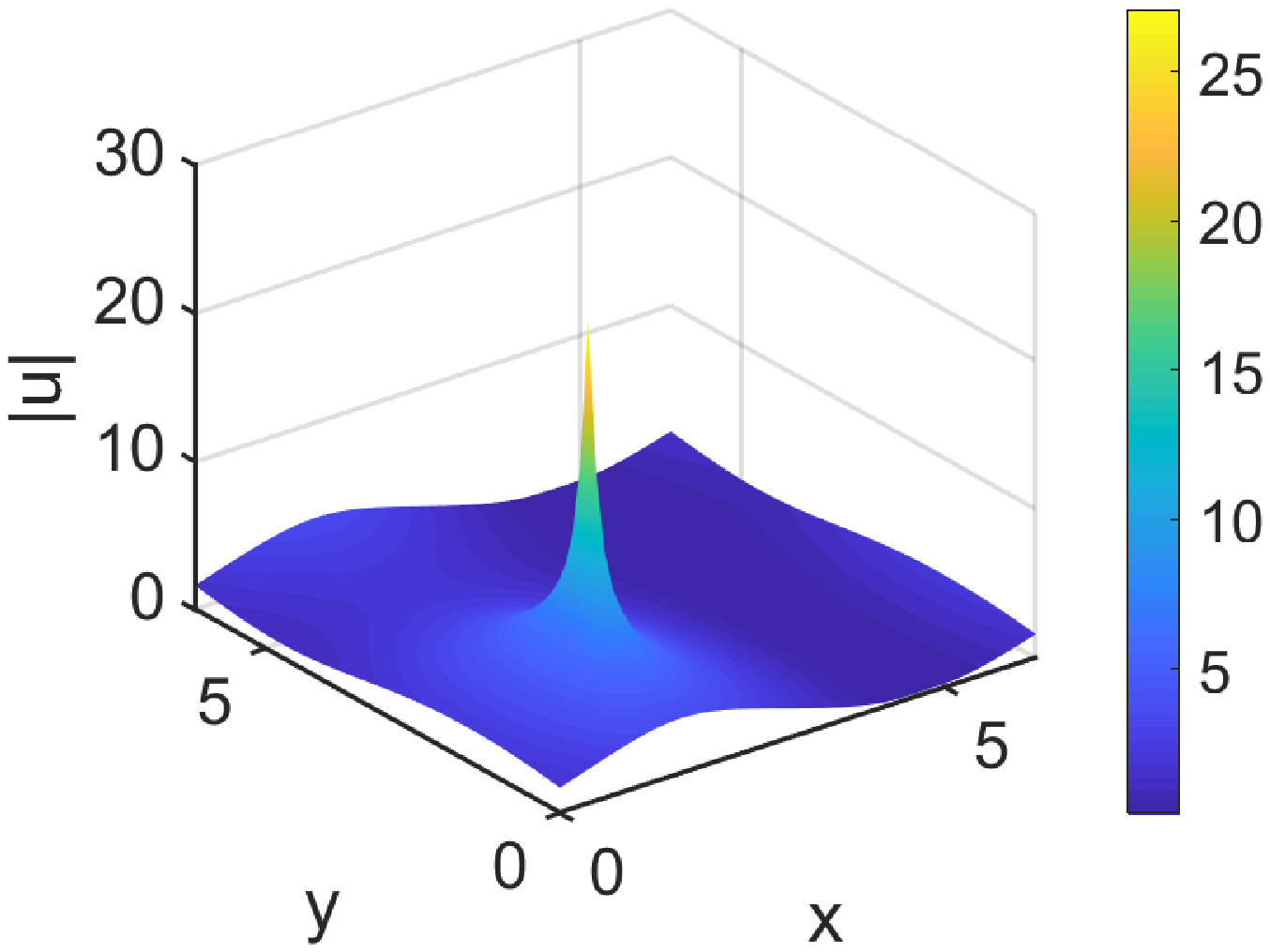}}
\subfloat[the contour at $t = 0.108$]{\includegraphics[width=.48\textwidth]{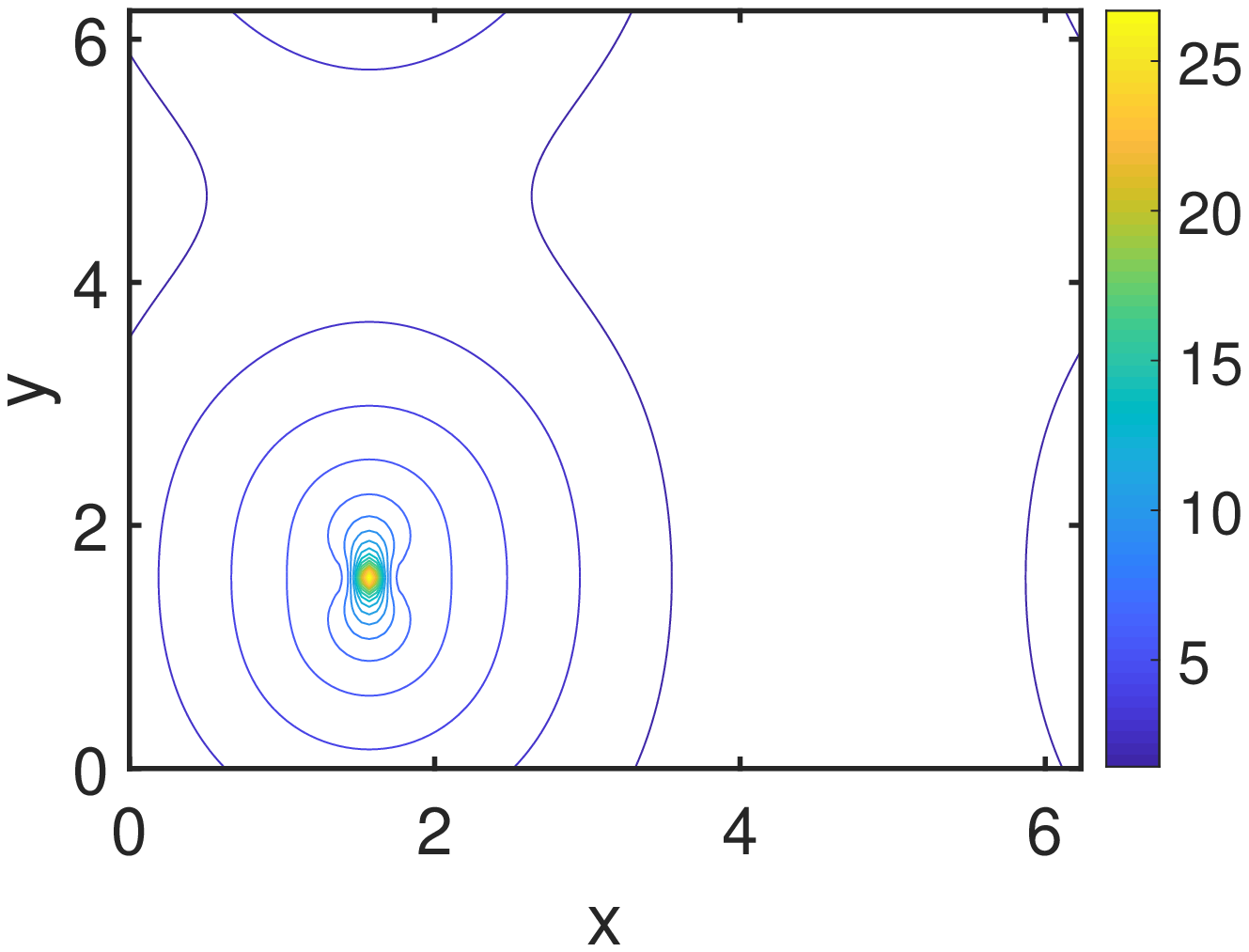}}
\caption{Profile and contour of singular solution at $t = 0.108$ with initial condition Eq. \eqref{exp:2dim-init}}
  \label{fig:2d-end}
\end{figure}

\begin{figure}[htbp]
  \centering
\subfloat[mass error]{\includegraphics[width=.48\textwidth]{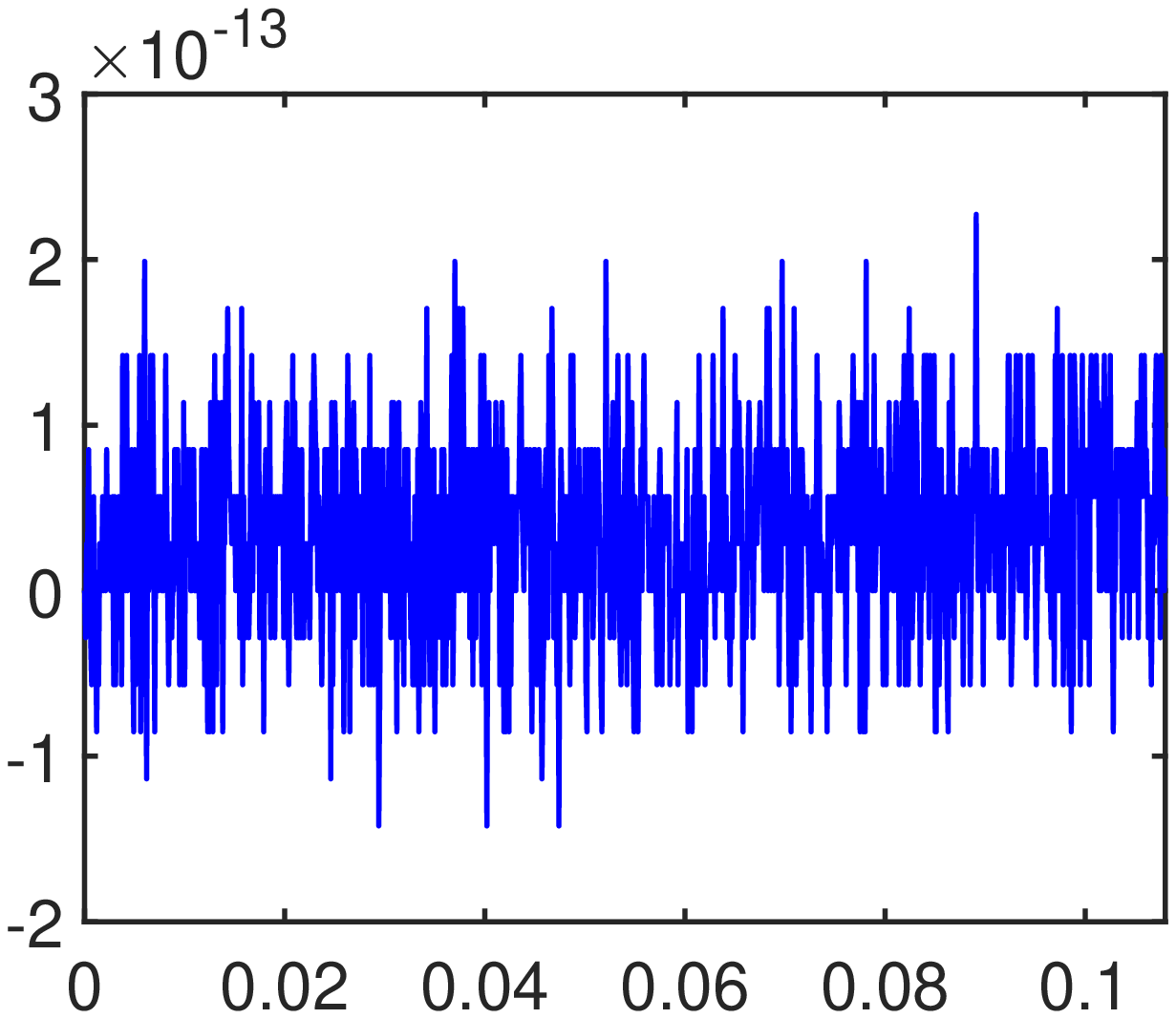}}
\subfloat[energy error]{\includegraphics[width=.48\textwidth]{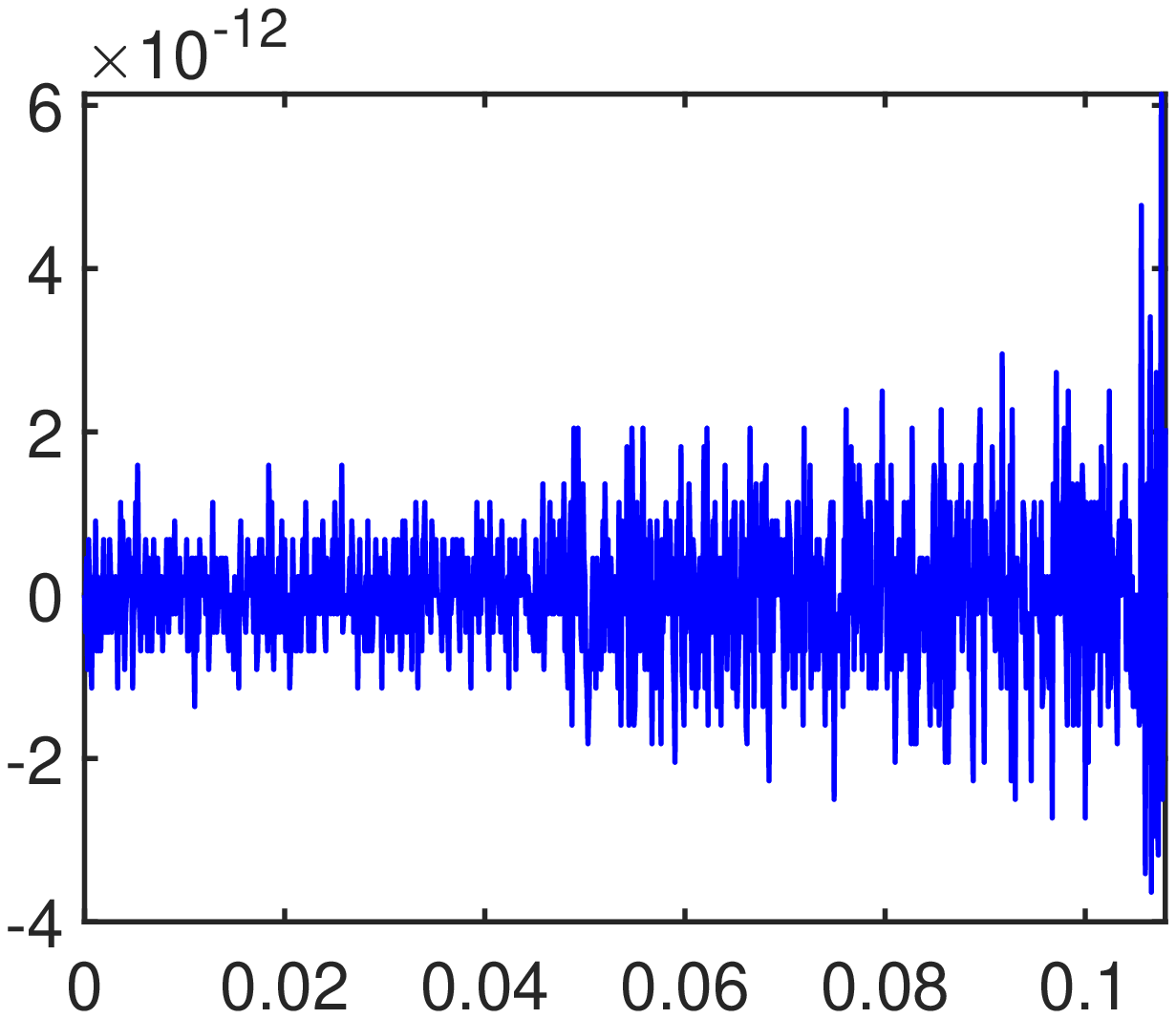}}
\caption{Error in mass and energy at with initial condition Eq. \eqref{exp:2dim-init}, ($|E(u_0)| \approx 1166.00$).}
  \label{fig:2d-error}
\end{figure}

\section{Conclusion}
In this paper, we studied a series of high order diagonally implicit Runge-Kutta methods which are mass and energy conservative for nonlinear Schr\"odinger equation. By applying the invariant
energy quadratizaton technique, the 1D and 2D NLS equations are
transformed into an equivalent reformulation which is subsequently discreted by the Fourier pseudospectral method in the space direction and so-called IEQ--DIRK schemes in the time direction. We gave rigorous proofs for the
conservative properties of the semi-discrete system and show that $M=\mathbf{0}$ is the necessary and sufficient condition for IEQ--DIRK schemes to preserve the mass and energy. The numerical results comfirmed the
theoretical analysis on convergence orders, conservative properties and longtime simulation stability, and verify the ability to capture the singularity of proposed schemes.
\bibliography{DIRK}

\end{document}